\newcommand{\NN}{\mathbb{N}}
\newcommand{\RR}{\mathbb{R}}
\newcommand{\cF}{\mathcal{F}}
\newcommand{\cG}{\mathcal{G}}
\newcommand{\cS}{\mathcal{S}}
\renewcommand{\epsilon}{\varepsilon}
\newtheorem{theorem}{Theorem}[section]
\newtheorem{lemma}[theorem]{Lemma}
\newtheorem{claim}[theorem]{Claim}
\title{
A geometric proof of the infinite $(p,q)$-theorem\\
for hyperplane piercing
}
\author{
Sutanoya Chakraborty\footnote{Indian Statistical Institute, Kolkata, India}
\and
Arijit Ghosh\footnotemark[1] 
\and 
Soumi Nandi\footnote{The Institute of Mathematical Science, Chennai, India}
}
\date{}
\begin{document}

\maketitle

\begin{abstract}
We provide a geometric proof of the $(\aleph_{0}, d+1)$-theorem for piercing compact connected sets by hyperplanes. Our argument uses only elementary properties of convex sets and clarifies the core geometric structure behind the theorem.
\end{abstract}

\section{Introduction}
Given two families $\cF$ and $\cG$ of sets, a set $S\in\cG$ is a {\em transversal} of a subset $\cF'\subseteq\cF$
if $S$ intersects every set in $\cF'$.
A subset $\cS\subseteq\cG$ is a {\em transversal of $\cF'$ with respect to $\cG$} if every set in $\cF'$ 
intersects some set in $\cS$.
Given $d\in\NN$ and $k\in\{0,\dots,d-1\}$, if $\cF$ is a family of sets in $\RR^d$ and $\cG$ is the set of all $k$-flats in $\RR^d$, then a subset $\cS\subseteq\cG$ is a $k$-transversal of $\cF'\subset\cF$ if every set in $\cF'$
intersects some set in $\cS$.
In addition, $\cS$ is called a finite $k$-transversal of $\cF'$ if $\cS$ is finite, a {\em point transversal} of $\cF'$
if $k=0$, a {\em line transversal} if $k=1$, and a {\em hyperplane transversal} if $k=d-1$.


The Alon–Kleitman $(p,q)$-theorem~\cite{alon1992piercing,AlonK97} is a deep extension of Helly's theorem. It states that given $p, q, d \in \mathbb{N}$ with $p\geq q \geq d+1$, for a collection $\mathcal{F}$ of compact convex sets in $\mathbb{R}^d$, if every $p$-tuple of sets in $\mathcal{F}$ contains $q$ sets that admit a common piercing point, then there exists an integer $c = c(p,q,d)$ such that $\mathcal{F}$ has a transversal of size $c$. Alon and Kalai~\cite{AlonK95} established the $(p,q)$-theorem for piercing convex sets with hyperplanes. Later, Alon, Kalai, Matoušek, and Meshulam~\cite{alon2002transversal} showed the impossibility of extending the $(p,q)$-theorem to piercing convex sets in $\mathbb{R}^d$ with $k$-flats when $0 < k < d-1$. In the same work, they also proved a more general result for {\em good covers} and {\em Leray complexes}, which implies the Alon–Kleitman $(p,q)$-theorem. For an introduction to this topic, see the surveys by Eckhoff~\cite{Eckhoff2003Survey} and Bárány and Kalai~\cite{BaranyKalai2022HellyTypeProblems}.

Keller and Perles \cite{KellerP22} introduced the notion of $(\aleph_0,q)$-property along the lines of $(p,q)$-property.
A family $\cF$ of sets in $\RR^d$ satisfies the $(\aleph_0,q)$-property with respect to another family $\cG$ of
sets if every infinite subset of $\cF$ contains $q$ sets that have a nonempty intersection with a set in $\cG$.
In a later paper~\cite{KellerP23}, they proved the following result:
\begin{theorem}[Keller and Perles]\label{thm:KP}
    Let $d\in\NN$ and $k\in\{0,\dots,d-1\}$.
    For a $\rho\geq 1$, define a set $B\subset\RR^d$ to be a {\em near-ball with parameter $\rho$} if 
    there is a point $b\in B$ and two positive numbers $r,R$ such that $\overline{B}(b,r)\subseteq B
    \subseteq \overline{B}(b,R)$, $\rho r\geq R$, and $\rho + r\geq R$.
    Then, if $\cF$ is a family of compact near-balls with parameter $\rho$ in $\RR^d$ and $\cF$ satisfies 
    the $(\aleph_0,k+2)$-property with respect to $k$-flats, then $\cF$ has a finite $k$-transversal.
\end{theorem}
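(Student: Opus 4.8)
The plan is to argue by contraposition: assuming that $\cF$ has no finite $k$-transversal, we extract from $\cF$ an infinite subfamily no $k+2$ members of which meet a common $k$-flat, contradicting the $(\aleph_0,k+2)$-property. The only features of near-balls used are the sandwich $\overline{B}(b,r)\subseteq B\subseteq \overline{B}(b,R)$ together with $R\le \rho r$ (scale-invariant roundness, which bounds how far a piercing flat can tilt away from a reference flat) and $R-r\le\rho$ (absolute roundness, which lets us treat enormous members as honest balls). The two elementary geometric facts driving everything are: a $k$-flat $F$ meets $B$ only if $\dist(b,F)\le R$, and it does meet $B$ as soon as $\dist(b,F)\le r$.

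The skeleton is a Zorn argument. Choose $\cF^{*}\subseteq\cF$ maximal subject to the two finite-character conditions that (a) no $k+2$ members of $\cF^{*}$ share a $k$-flat, and (b) any finitely many centers of $\cF^{*}$ are affinely in general position; Zorn's lemma supplies such an $\cF^{*}$. If $\cF^{*}$ is infinite we are done, so assume it is finite (the cases with $|\cF^{*}|$ small force $\cF$ to be essentially lower-dimensional and are handled by the dimension induction below). Maximality then forces, for every $B\in\cF$: either the center of $B$ lies on one of the finitely many hyperplanes spanned by $d$ centers of $\cF^{*}$ (which exhibits a $k$-flat through $B$ when $k=d-1$, and is absorbed into the same analysis when $k<d-1$), or there are $k+1$ members $S\subseteq\cF^{*}$ such that $B$ meets the set $N(S)$ of all $k$-flats that meet the $k+1$ near-balls of $S$. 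There are finitely many such $N(S)$, and by general position plus the bound $R\le\rho r$ each $N(S)$ is a controlled neighbourhood of the $k$-flat $F_{S}$ (the affine hull of the centers of $S$): its width and its angular spread around $F_{S}$ are bounded in terms of $d$, $\rho$, and the finitely many (hence positive) conditioning numbers of the subsets of $\cF^{*}$. It therefore suffices to show that $\cF_{S}:=\{B\in\cF:B\cap N(S)\ne\emptyset\}$ has a finite $k$-transversal for each $S$.

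Fix $S$ and split $\cF_{S}$ by scale. If $R$ is large relative to the width of $N(S)$ then $\dist(b,F_{S})\le(1+\rho)r$, and — using $R-r\le\rho$ to control the very large members — a net of boundedly many $k$-flats parallel to $F_{S}$ pierces all such $B$; the number of translates needed is bounded because the relevant radii differ by a factor at most $\rho$, and for huge members only by the additive constant $\rho$. The remaining members have bounded circumradius, so their centers lie in a bounded tube around $F_{S}$; among those, the members with inradius bounded below by any fixed constant are pierced by a finite grid of parallel $k$-flats. What survives is a family of arbitrarily small near-balls concentrated in a bounded tube around $F_{S}$. For these I would induct on the ambient dimension: project orthogonally onto $F_{S}\cong\RR^{k}$, note the projections are again near-balls with the same parameter $\rho$, show they inherit the $(\aleph_0,k+1)$-property with respect to $(k-1)$-flats of $\RR^{k}$, apply the theorem in dimension $k$ for $(k-1)$-flats, and pull the resulting finite $(k-1)$-transversal back up to a finite $k$-transversal. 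The base case $d=1$, $k=0$ is the classical identity $\tau=\nu$ for intervals on a line together with a compactness argument.

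The hard part is exactly this last step: the inheritance of the $(\aleph_0,\cdot)$-property under the projection, together with the quantitative control of $N(S)$ that feeds it. A $k$-flat nearly parallel to $F_{S}$ can meet $k+2$ of the tiny near-balls while its trace on $F_{S}$ misses their projections, so one must first prune the family (or iterate the $N(S)$-decomposition) so that the co-piercing flats are transverse enough; the point that makes this work is that, since the near-balls are tiny, trapped in a thin tube, and satisfy $R\le\rho r$, a near-parallel co-piercing flat forces the relevant centers to be nearly coplanar in one lower dimension, so each iteration either drops the dimension of the flat around which the family concentrates or shrinks the tube by a definite factor, and hence terminates. Pinning down this interaction between the conditioning of the centers and the admissible tilt of co-piercing flats is the technical core; the rest is bookkeeping with the two distance facts above.
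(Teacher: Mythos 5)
First, a point of order: Theorem~\ref{thm:KP} is a quoted result of Keller and Perles~\cite{KellerP23}; this paper does not prove it, so there is no ``paper's own proof'' to compare against. The closest internal analogue is the paper's contrapositive strategy for Theorem~\ref{thm:hyperplane} (assume no finite transversal, extract an infinite independent subfamily), which your opening paragraph correctly mirrors. Judged on its own terms, however, your sketch has gaps that sit exactly where the real difficulty of the theorem lies.

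The central problem is your treatment of $N(S)$, the union of all $k$-flats meeting the $k+1$ near-balls of $S$. You assert it is ``a controlled neighbourhood of $F_S$'' with bounded \emph{width}, and later conclude that the relevant centers ``lie in a bounded tube around $F_S$.'' That is false: only the angular spread of $N(S)$ around $F_S$ is controlled, so $N(S)$ is a cone-like region whose width grows linearly with distance from $S$. A tiny near-ball arbitrarily far from $F_S$ can meet $N(S)$, so the split ``bounded circumradius $\Rightarrow$ bounded tube'' collapses, and with it the scale decomposition that follows. Handling families that escape to infinity inside such a cone --- where the co-piercing flats may tilt --- is precisely the case that forces the separate $Q(\vec{r},n)$ analysis in this paper (Claim~\ref{cl:convergence_line}\ref{ite:Q}, Theorem~\ref{thm:infinite_pq_hyperplane_unbdd}, Claim~\ref{cl:Q_r}) and the corresponding work in Keller--Perles; your sketch does not engage with it. Two further gaps: (i) the inductive step requires the projected family on $F_S\cong\RR^k$ to satisfy the $(\aleph_0,k+1)$-property with respect to $(k-1)$-flats, which does not follow from the $(\aleph_0,k+2)$-property of the original family for $k$-flats without exactly the transversality pruning you defer, and your proposed termination mechanism (``shrinks the tube by a definite factor'' at each iteration) does not terminate in finitely many steps --- it only converges, so a separate compactness argument is needed; (ii) the branch of the Zorn dichotomy in which the center of $B$ is affinely dependent on centers of $\cF^{*}$ yields a piercing \emph{hyperplane}, not a $k$-flat, so for $k<d-1$ it is not ``absorbed into the same analysis'' but is an unhandled case. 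The overall architecture (contraposition, Zorn-maximal independent subfamily, dichotomy, dimension induction via projection) is sound and close in spirit to the known proofs, but as written the argument does not go through.
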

Jung and P\'{a}lv\"{o}lgyi~\cite{jung2024noteinfiniteversionspqtheorems} developed a general framework for geometric families, which shows that one can obtain an $(\aleph_0,q)$-theorem whenever the corresponding $(p,q)$-theorem and fractional Helly theorems are available.  
As consequences, they deduced $(\aleph_0,d+1)$-theorems for piercing compact convex sets with points and compact connected sets with hyperplanes. Jung and P\'{a}lv\"{o}lgyi~\cite{jung2024noteinfiniteversionspqtheorems,JungP25} also provided an alternative proof of Theorem~\ref{thm:KP} in the special case of closed balls. Chakraborty, Ghosh, and Nandi~\cite{chakraborty2025finitektransversalsinfinitefamilies} established an $(\aleph_{0}, k+2)$-theorem for piercing compact $\rho$-fat convex sets with $k$-flats.

\section{Our results}
We provide a geometric proof of the following result:

\begin{theorem}\label{thm:hyperplane}
    Let $d\in\mathbb{N}$ and $\cF$ be a family of compact connected sets in $\RR^d$.
    If every infinite subset of $\cF$ contains $d+1$ sets that are pierced by a hyperplane, then $\cF$ has a finite hyperplane transversal.
\end{theorem}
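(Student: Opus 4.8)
The plan is to argue by contraposition, after first passing to convex sets. The opening observation is that a hyperplane meets a compact connected set $S$ exactly when it meets $\operatorname{conv}(S)$: for a unit vector $a$ the functional $\langle a,\cdot\rangle$ maps $S$ onto a compact interval, and it maps $\operatorname{conv}(S)$ onto the same interval, so $S$ and $\operatorname{conv}(S)$ cross the same hyperplanes. Replacing every member of $\cF$ by its convex hull therefore changes neither the hypothesis nor the conclusion, and from now on $\cF$ is a family of compact convex sets. It is convenient to keep the dual picture in mind as well: parametrising hyperplanes by $S^d$ (via $\{x:\langle a,x\rangle=\beta\}$ after normalising $(a,\beta)$), the set $T_K$ of hyperplanes meeting a compact convex $K$ is the complement of a pair of antipodal open spherically convex sets — those hyperplanes having $K$ strictly inside one open halfspace.

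Assume now that $\cF$ has no finite hyperplane transversal; I will construct an infinite subfamily $K_1,K_2,\dots\in\cF$ in which no hyperplane meets more than $d$ of the $K_i$ — equivalently, no $d+1$ of them admit a common hyperplane transversal — contradicting the $(\aleph_0,d+1)$-property. The construction is greedy: having chosen $K_1,\dots,K_n$ with this property, let $Z_n=\bigcup_{|I|=d}\bigcap_{i\in I}T_{K_i}$ be the set of hyperplanes that already meet $d$ of them, a finite union of closed sets, each being the set of common hyperplane transversals of a $d$-element subfamily. If $Z_n$ is \emph{not} a hyperplane transversal of $\cF$, pick any $K\in\cF$ missed by every hyperplane in $Z_n$ and set $K_{n+1}=K$: any hyperplane through $K_{n+1}$ lies outside $Z_n$, hence meets at most $d-1$ of $K_1,\dots,K_n$, so the property persists. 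If the process runs forever we obtain the desired subfamily; the whole difficulty is thus to rule out the halting case, in which $Z_n$ is a transversal of $\cF$ (up to finitely many sets, each of which can be handled by a single extra hyperplane).

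In the halting case every $K\in\cF$ has a common hyperplane transversal with some $d$ of the chosen sets; grouping $\cF$ by which $d$-tuple witnesses this gives finitely many classes, so the crux — and the place where elementary convexity and the exact value $d+1$ must enter — is the assertion: \emph{if $G_1,\dots,G_d$ are fixed compact convex sets in $\RR^d$ and $\cF'$ is a family of compact convex sets, each admitting a common hyperplane transversal with $G_1,\dots,G_d$ and satisfying the $(\aleph_0,d+1)$-property, then $\cF'$ has a finite hyperplane transversal.} Without the infinite Helly-type hypothesis this fails (take $d=1$, $G_1=[0,1]$, and $\cF'$ all singletons in $[0,1]$), so it is genuinely used. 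I would prove the crux by induction on $d$: the base case $d=1$ is the statement that an infinite family of intervals with no infinite pairwise disjoint subfamily has a finite point transversal, which follows from compactness of intervals together with a Dilworth–Mirsky argument presenting such a family as a finite union of pairwise intersecting subfamilies, each with a common point; for the inductive step one restricts the search to the hyperplanes of $\bigcap_i T_{G_i}$ — in the dual the complement of $2d$ spherically convex subsets of $S^d$, a set ``pinned down'' by the finitely many common supporting hyperplanes of the $G_i$ — and uses this rigidity to descend to a transversal problem in dimension $d-1$. Making this descent precise, and in particular transferring the correct $(\aleph_0,\cdot)$-property through the restriction, is the technical heart of the proof and, I expect, the main obstacle.
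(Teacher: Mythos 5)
Your reduction to convex hulls and your overall contrapositive/greedy framework (extract an infinite subfamily no $d+1$ of whose members share a hyperplane transversal) match the paper's strategy in spirit, and the non-halting branch of your greedy step is sound. The problem is that all of the difficulty sits in the halting case, and the ``crux'' you isolate there is both unproved and resting on a false premise. You claim that the set $\bigcap_{i}T_{G_i}$ of common hyperplane transversals of the fixed sets $G_1,\dots,G_d$ is ``pinned down'' rigidly, and that this rigidity permits a descent to dimension $d-1$. But $d$ compact convex sets in $\RR^d$ impose essentially no rigidity: if $G_1,\dots,G_d$ all contain a common ball (which nothing in your greedy construction forbids --- the independence condition only constrains $(d+1)$-tuples), then $\bigcap_i T_{G_i}$ contains a nonempty \emph{open} set of hyperplanes in your dual parametrisation, and the hypothesis ``each member of $\cF'$ shares a transversal with $G_1,\dots,G_d$'' is satisfied by \emph{every} family living inside that ball. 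In that situation your crux assertion is literally the original theorem again (for families in a bounded region), so the proposed induction is circular rather than a reduction. Your base case also has a smaller gap: ``no infinite pairwise disjoint subfamily'' does not immediately give a bounded independence number, which is what the Dilworth--Mirsky/perfect-graph argument needs; a separate compactness argument is required.

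The paper gets around exactly this obstacle by never conditioning on which previously chosen sets a new set shares a transversal with. Instead it localises where the ``hard'' sub-collections accumulate: a nested-box argument (Claim~\ref{cl:convergence_point}) produces a point $p$ such that $(d-1)$-collections survive inside arbitrarily small neighbourhoods of $p$ (or outside arbitrarily large balls), and a nested-cone refinement (Claim~\ref{cl:convergence_line}) upgrades this to a ray $\vec r$ such that $(d-1)$-collections survive inside the arbitrarily thin regions $R(\vec r,n)$ or $Q(\vec r,n)$. Projecting along $\vec r$ onto the hyperplane through $p$ perpendicular to $\vec r$ then yields a genuine $(D-2)$-collection in dimension $D-1$, which is where the induction actually happens; the greedy extension is finished by two compactness facts (a hyperplane through $d$ compact sets missing $p$ misses a whole ball around $p$; one not parallel to $\vec r$ eventually misses $Q(\vec r,N)$, Claim~\ref{cl:Q_r}). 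If you want to salvage your outline, you would need to replace the ``shared transversal with a $d$-tuple'' classification by some such localisation step that genuinely lowers the dimension; as written, the descent does not get off the ground.
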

This theorem was proved in \cite{jung2024noteinfiniteversionspqtheorems} using the corresponding
$(p,d+1)$-theorem together with the fractional Helly theorem. Our proof avoids these tools and is closer in spirit to \cite{KellerP23}. Instead, it uses a direct geometric idea based on a simple consequence of the {\em hyperplane separation theorem}: if a hyperplane does not intersect a connected set, then the set lies completely in one of the two half-spaces defined by the hyperplane.
Looking at the problem in this geometric way makes it easier to see the main structural features of the result and shows how the statement follows naturally from basic properties of convex sets. Additionally, using the techniques from \cite{chakraborty2025finitektransversalsinfinitefamilies}, the result can also be extended to its {\em colorful version}, which we state below.
\begin{theorem}\label{thm:hyperplane_colorful}
    Let $d\in\mathbb{N}$ and $\{\cF_n\}_{n\in\mathbb{N}}$ be a sequence of families of compact connected sets such that every sequence $\{B_n\}_{n\in\mathbb{N}}$, where $B_n\in\cF_{m_n}$ for some strictly increasing sequence $\{m_n\}_{n\in\mathbb{N}}$ in $\NN$, contains $d+1$ sets that are pierced by a hyperplane.
    Then there is an $N\in\mathbb{N}$ such that there is a finite collection of hyperplanes that is a transversal for every $\cF_n$ with $n\geq N$.
\end{theorem}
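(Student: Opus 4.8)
The plan is to deduce Theorem~\ref{thm:hyperplane_colorful} from the construction underlying Theorem~\ref{thm:hyperplane} by running that construction while keeping track of the color classes, following the techniques of \cite{chakraborty2025finitektransversalsinfinitefamilies}.

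First I would reformulate the conclusion. For $N\in\NN$ put $\cG_N:=\bigcup_{n\ge N}\cF_n$. A finite family of hyperplanes is a transversal of every $\cF_n$ with $n\ge N$ precisely when it is a transversal of $\cG_N$, so the conclusion is equivalent to the assertion that $\cG_N$ has a finite hyperplane transversal for some $N$. Suppose this fails; then $\cG_N$ has no finite hyperplane transversal for every $N$. The goal is to build from this a sequence $\{B_j\}_{j\in\NN}$ with $B_j\in\cF_{m_j}$ for some strictly increasing $\{m_j\}_{j\in\NN}$ such that no hyperplane pierces $d+1$ of the $B_j$; that contradicts the hypothesis of the theorem.

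Next, recall the mechanism behind Theorem~\ref{thm:hyperplane}. Since a connected set disjoint from a hyperplane lies in one of the two open half-spaces it bounds, a compact connected set and its convex hull are pierced by exactly the same hyperplanes, and one may work with compact convex sets. The proof of Theorem~\ref{thm:hyperplane} produces, from a family with no finite hyperplane transversal, an infinite subfamily $C_1,C_2,\dots$ no $d+1$ of which are pierced by a common hyperplane, and it does so greedily; the feature I would rely on is that the choice of $C_k$ is \emph{local}, constrained only to avoid a finite set of hyperplanes determined by $C_1,\dots,C_{k-1}$ (together with a ``small enough / far enough'' requirement met by all but a manageable part of the family), so that ``no finite hyperplane transversal'' is exactly what keeps the extraction alive. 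The half-space property is also what makes the remaining sets split, according to which side of the finitely many already-placed hyperplanes they lie on, into finitely many sub-collections of which at least one is still transversal-free.

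Finally — and this is where the work is — I would thread the color requirement through this construction. At a stage where all colors used so far are below $M$, I would draw the next set from $\cG_{M+1}$; by the standing assumption this family still has no finite hyperplane transversal, hence still contains a set avoiding any prescribed finite family of hyperplanes, so the colors strictly increase. The obstacle is that the extraction also needs the \emph{residual} family, the sets surviving the geometric constraints imposed so far, to remain transversal-free, whereas discarding the low colors may delete precisely its transversal-free part. I expect this to be handled by the dichotomy of \cite{chakraborty2025finitektransversalsinfinitefamilies}: at each stage, either the residual family stays transversal-free after restriction to colors above $M$, and the extraction continues with strictly larger colors; or its transversal-freeness is concentrated in finitely many low color classes, hence in a single $\cF_{n_0}$, in which case one records an infinite piece of the configuration living inside $\cF_{n_0}$, replaces $\cG_{M+1}$ by $\cG_{n_0+1}$ (still transversal-free), and continues. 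Iterating yields finitely or infinitely many infinite configurations sitting in strictly increasing bands of color classes, and the last task — the main technical hurdle — is to splice one set out of each band into a single sequence no $d+1$ of which are pierced by a common hyperplane. Here the half-space property is used once more: each band's configuration, being transversal-free in its own right, has members that are arbitrarily small or recede to infinity, so one can take the spliced representatives to shrink to pairwise-generic points (respectively, recede in pairwise-generic directions), which preserves the certificate that no hyperplane pierces $d+1$ of the chosen sets. The resulting rainbow sequence contradicts the hypothesis, which proves the theorem.
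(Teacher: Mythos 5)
Your setup is right: the conclusion is equivalent to some $\cG_N:=\bigcup_{n\ge N}\cF_n$ having a finite hyperplane transversal, and if every $\cG_N$ is a $(d-1)$-collection one must produce a \emph{rainbow} $(d-1)$-independent sequence to contradict the hypothesis. But the way you resolve the obstacle you correctly identify does not work. The fatal step is the final ``splicing'': taking one set from each of infinitely many separately $(d-1)$-independent configurations does not yield a $(d-1)$-independent sequence, and ``shrink to pairwise-generic points'' is not something you get to arrange --- the accumulation points of the different bands are dictated by the families (via Claim~\ref{cl:convergence_point}) and may all coincide, or lie on a common hyperplane, in which case arbitrarily small representatives near them can perfectly well admit a common piercing hyperplane. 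Indeed, the entire difficulty of Theorem~\ref{thm:infinite_pq_hyperplane_bdd} is precisely the case where \emph{all} chosen sets shrink toward a single point $p$; if ``small sets near limit points are generically unpierceable'' were available, that theorem would be trivial. Your dichotomy's second branch also moves in the wrong direction: if the transversal-freeness of the residual family concentrates in some $\cF_{n_0}$ with $n_0\le M$, replacing $\cG_{M+1}$ by the \emph{larger} family $\cG_{n_0+1}$ does not restore transversal-freeness of the residual family above the colors already used, and an infinite configuration inside the single class $\cF_{n_0}$ contributes at most one set to a rainbow sequence, so no progress is made.

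The missing idea is a monotonicity-plus-pigeonhole observation that lets you thread the colors through the \emph{whole} induction rather than patching it at the end. Since $\cG_1\supseteq\cG_2\supseteq\cdots$, for any fixed region $U$ the property ``$\{B\in\cG_N\mid B\subset U\}$ is a $(d-1)$-collection'' is monotone decreasing in $N$. Every subdivision step in Claims~\ref{cl:convergence_point} and~\ref{cl:convergence_line} offers only finitely many candidate regions, and for each $N$ at least one candidate is good for $\cG_N$; hence, by pigeonhole over the finitely many candidates, some candidate is good for \emph{every} $N$. Running the nested constructions with this strengthened invariant produces a point $p$, a ray $\vec{r}$, and regions $R(\vec{r},n)$ or $Q(\vec{r},n)$ whose residual families remain $(d-1)$-collections after restriction to $\cG_N$ for all $N$ simultaneously; the finite witnesses $\cS_n$ of Lemma~\ref{lem:KP} can then be drawn from $\cF$-classes of ever higher index, the induction hypothesis must be stated in this colorful form so that the projected sequence $\{\pi(B'_n)\}$ is itself rainbow, and the greedy extension steps go through verbatim because at each stage the next set may be taken from $\cG_{M+1}$ inside the current residual region. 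This is the content of the technique of \cite{chakraborty2025finitektransversalsinfinitefamilies} that the paper invokes; your sketch names the right obstacle but does not supply the mechanism that overcomes it, and the substitute mechanism you propose is unsound.
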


\section{Definitions and Notations}
\begin{itemize}
  \item For any $n\in\mathbb{N}$, $[n]$ denotes the set $\{1,\dots,n\}$.
  \item The origin in $\mathbb{R}^d$ is denoted by $O$.
  \item For any set $A\subseteq\mathbb{R}^d$, $\overline{A}$ denotes the closure and $A^{o}$ denotes the interior of $A$ in the standard topology of $\mathbb{R}^d$,  
  \item Given a point $p\in\mathbb{R}^d$ and $\epsilon>0$, $B(p,\epsilon)$ 
    denotes the open ball of radius $\epsilon$ in $\mathbb{R}^d$ centered at $p$.
    $\overline{B}(p,\epsilon)$ denotes the closure of $B(p,\epsilon)$ in the
    standard topology of $\mathbb{R}^d$.
  \item For any $x\in\mathbb{R}^d$, $||x||$ denotes the $L_2$-norm of $x$.
  \item $\mathbb{S}^{d-1}$ denotes the set $\{x\in\mathbb{R}^d\mid ||x||=1\}$.
  \item For any $x\in\mathbb{R}^d$ and a set $S\subseteq\mathbb{R}^d$, $\mathrm{dist}(x,S)
    : = \inf\{||x-b||\;|\;b\in B\}$. 
    For two sets $A$ and $B$ in $\mathbb{R}^d$, $\mathrm{dist}(A,B): =\inf\{||a-b||\mid 
    a\in A,\; b\in B\}$.
  \item \textit{$\mathrm{aff}(S)$} Given a set $S\subseteq\mathbb{R}^d$, $\mathrm{aff}(S)$ denotes the affine space generated by $S$.
  \item \textit{$k$-flat}: A $k$-flat in $\mathbb{R}^d$ is a $k$-dimensional affine space in $\mathbb{R}^d$.
  \item Given a $p\in\mathbb{R}^d$, a {\em ray} is the set $\{p+tv\mid t\geq 0\}$, where $v\in\mathbb{S}^{d-1}$.
  \item Given any set $U\subseteq \mathbb{R}^d$, $\mathrm{pos}(U):=\{tu\mid t\geq 0,\; u\in U\}$.
  \item \textit{Unbounded cone:} Given a $p\in\mathbb{R}^d$ and a set $U\subseteq\mathbb{R}^d$, an \textit{unbounded cone from $p$ through $p+U$} is the set $p+\mathrm{pos}(U)$.
  \item $R(\vec{r},n)$ and $Q(\vec{r},n)$ : Given a ray $\vec{r} = \{p+tv\mid t\geq 0\}$, $v\in\mathbb{S}^{d-1}$,
    let $h$ be the hyperplane perpendicular to $\vec{r}$ that passes through 
    $p+v$.
    let $U_n=\{x\in h\mid ||x-v||\leq\frac{1}{n}\}$.
    We define $R(\vec{r},n):=(p+\mathrm{pos}(U_n))\cap B(p,\frac{1}{n})$ (see Figure~\ref{fig:R}), 
    and $Q(\vec{r},n):=(p+\mathrm{pos}(U_n))\setminus B(p,n)$ (see Figure~\ref{fig:Q}).
    
  \item 
    \textit{$k$-independent}: Given $d\in\mathbb{N}$ and $k\in\{0,\dots,d-1\}$, a family 
    $\mathcal{S}$ of sets in $\mathbb{R}^d$ is $k$-independent if there is 
    no $k$-flat that passes through $k+2$ distinct sets of $\mathcal{S}$.
  
    \item 
        \textit{$k$-collection}: Given $d\in\mathbb{N}$ and $k\in\{0,\dots,d-1\}$, a family
    $\mathcal{F}$ is $k$-independent if $\mathcal{F}$ does not have a finite 
    $k$-transversal.
\end{itemize}

\begin{figure}
    \centering
    \includegraphics[width=0.60\linewidth]{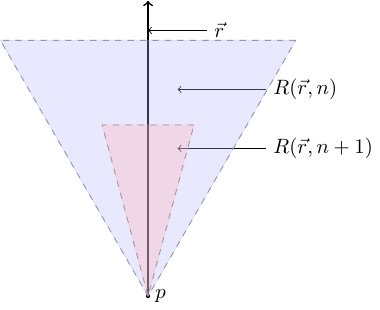}
    \caption{This figure shows $R(\vec{r},n)$ in $\mathbb{R}^2$}
    \label{fig:R}
\end{figure}

\begin{figure}
    \centering
    \includegraphics[width=0.60\linewidth]{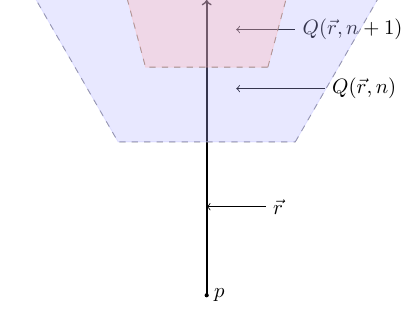}
    \caption{This figure shows $Q(\vec{r},n)$ in $\mathbb{R}^2$}
    \label{fig:Q}
\end{figure}
\section{Proof of Theorem \ref{thm:hyperplane}}
Let $\mathcal{F}$ be a collection of compact connected sets in $\mathbb{R}^d$. 
We show that if $\mathcal{F}$ does not satisfy the $(\aleph_0,d+1)$-property with respect to hyperplanes, then there is an infinite subset
$\mathcal{S}$ of $\mathcal{F}$ such that there is no hyperplane passing through any $d+1$ distinct sets $B_1,\dots,B_{d+1}\in\mathcal{S}$.

Recall that a $(d-1)$-collection is a collection of sets in $\mathbb{R}^d$ which does not have a finite hyperplane transversal, and 
a $(d-1)$-independent collection $\mathcal{S}$ of sets in $\mathbb{R}^d$ is a collection of sets in $\mathbb{R}^d$ such that for distinct $B_1\dots,B_{d+1}\in\mathcal{S}$,
there does not exist a $(d-1)$-flat that pierces every $B_i$, $i\in[d+1]$.

So, in other words, we will show that a $(d-1)$-collection $\mathcal{F}$ of compact connected sets in $\mathbb{R}^d$ has an infinite subset 
$\mathcal{S}$ that is $(d-1)$-independent.

\begin{claim}\label{cl:convergence_point}
 
 Let $\mathcal{F}$ be a $(d-1)$-collection of compact connected sets.
 There is a point $p\in\mathbb{R}^d$ such that one of the following statements is true:
   \begin{enumerate}
     \item\label{ite:p_1} for every bounded open set $U\subset\mathbb{R}^d$ containing $p$, there is a $(d-1)$-collection $\mathcal{F}_U\subset\mathcal{F}$ such that $\forall B\in\mathcal{F}_U$,
       $B\subset U$.
     \item\label{ite:p_2} for every bounded open set $U\subset\mathbb{R}^d$ containing $p$, there is a $(d-1)$-collection $\mathcal{F}_U\subset\mathcal{F}$ such that $\forall B\in\mathcal{F}_U$,
       $B\subset\mathbb{R}^d\setminus U$.
   \end{enumerate}

\end{claim}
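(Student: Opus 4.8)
The plan is to construct $p$ by a scale-refinement procedure resting on two facts. The first is \emph{finite subadditivity}: if a $(d-1)$-collection is written as a \emph{finite} union of subfamilies, one of the pieces is again a $(d-1)$-collection (otherwise finitely many finite hyperplane transversals could be merged); also, any superfamily of a $(d-1)$-collection is a $(d-1)$-collection. The second is an \emph{intermediate-value principle} for connected sets: if $B$ is compact connected, contains a point within distance $r$ of $g$, and is not contained in $\ol B(g,r')$ for some $r'>r$, then $\|x-g\|$ attains every value in $(r,r')$ on $B$, so $B$ meets $S(g,t)$ for each such $t$; applying the same idea to $x\mapsto\langle x,u\rangle$, a connected set reaching far from $g$ in direction $u$ must cross every hyperplane normal to $u$ that lies between its near part and its far part. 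This is the elementary consequence of the separation theorem advertised in the introduction.

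\textbf{Disposing of alternative~\ref{ite:p_2} and confining $\cF$.} Since every bounded set sits inside a bounded open ball, alternative~\ref{ite:p_2} is independent of $p$: it is equivalent to ``$\set{B\in\cF : B\cap W=\emptyset}$ is a $(d-1)$-collection for every bounded $W$''. If that holds, any $p$ works. Otherwise there is a closed ball $\ol W=\ol B(w_0,\rho)$ with $\set{B\in\cF : B\cap\ol W=\emptyset}$ admitting a finite hyperplane transversal, so by subadditivity $\cF_1:=\set{B\in\cF : B\cap\ol W\neq\emptyset}$ is a $(d-1)$-collection whose members all meet $\ol W$; as $\cF\supseteq\cF_1$, it suffices to find $p$ satisfying alternative~\ref{ite:p_1} for $\cF_1$. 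If no diameter-capped family $\set{B\in\cF_1 : \mathrm{diam}(B)\le m}$ were a $(d-1)$-collection, then $\set{B\in\cF_1 : \mathrm{diam}(B)>m}$ would be one for every $m$; but for large $m$, a set of diameter $>m$ meeting $\ol B(w_0,\rho)$ has, in some coordinate $i$, a point within $\rho$ of $w_0$ and a point at coordinate-distance $>2\rho$ from $w_0$, so by the intermediate-value principle it crosses one of the $2d$ hyperplanes $\set{x_i=(w_0)_i\pm D}$ for a suitable $D$ — a finite transversal, a contradiction. Hence some $\cG:=\set{B\in\cF_1 : \mathrm{diam}(B)\le m_0}$ is a $(d-1)$-collection, and all its sets lie in the fixed ball $V_0:=\ol B(w_0,\rho+m_0)$.

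\textbf{The refinement loop.} Its engine is the lemma: \emph{a family of compact connected sets all contained in a fixed ball $\ol B(c,R)$ and all of diameter $\ge\lambda>0$ has a finite hyperplane transversal}. To prove it, fix a finite $(\lambda/8)$-dense subset of $\ol B(c,R)$ and a finite net of unit vectors; a member $B$ has a point within $\lambda/8$ of some grid point $g$ and, since $\mathrm{diam}(B)\ge\lambda$, meets $S(g,\lambda/4)$ at some point in a direction $v$; picking a net vector $u$ with $\langle v,u\rangle$ close to $1$ and applying the intermediate-value principle to $x\mapsto\langle x-g,u\rangle$ along $B$ shows $B$ meets a single hyperplane determined by $(g,u)$, and there are finitely many such hyperplanes. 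Granting the lemma, iterate: given a $(d-1)$-collection $\cG_i$ all inside $\ol B(c_i,R_i)$ (start with $\cG_0=\cG$, $c_0=w_0$, $R_0=\rho+m_0$), the lemma lets us discard the sets of diameter $\ge R_i/6$ and retain a $(d-1)$-collection of sets of diameter $<R_i/6$; covering $\ol B(c_i,R_i)$ by finitely many balls $B(c_{i,l},R_i/3)$, each retained set lies inside some $B(c_{i,l},R_i/2)$, so by finite subadditivity one of these balls contains a $(d-1)$-collection $\cG_{i+1}$, and we set $c_{i+1}:=c_{i,l}$, $R_{i+1}:=R_i/2$. Then $R_i\to 0$ and $\|c_{i+1}-c_i\|\le R_i$ is summable, so $c_i\to p$; for every bounded open $U\ni p$ we have $\ol B(c_i,R_i)\subseteq U$ for large $i$, whence $\set{B\in\cF : B\subseteq U}\supseteq\cG_i$ is a $(d-1)$-collection. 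This is exactly alternative~\ref{ite:p_1}.

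\textbf{Main obstacle.} The heart of the argument is the lemma of the last step — that confining not-too-small connected sets to a bounded region forces a finite hyperplane transversal; its proof is where the intermediate-value/separation idea does the real work. Without it the loop stalls: if at some stage the surviving $(d-1)$-collection consisted of ``medium-scale'' sets whose diameters stayed comparable to $R_i$, they could not be pushed into balls of radius $\ll R_i$ and the refinement would not converge to a point. The remaining work is routine: choosing the cutoff $R_i/6$ and the cover-radius $R_i/3$ so that diameter-$<R_i/6$ sets genuinely fit into radius-$R_i/2$ balls, keeping the center displacements summable, and verifying carefully that alternative~\ref{ite:p_2} is $p$-independent.
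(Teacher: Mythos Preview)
Your argument is correct, but the paper's proof is substantially shorter because it exploits a single trick that replaces both your diameter-capping step and your net lemma. Instead of balls, the paper works with axis-parallel cubes $A_m=[-m,m]^d$. If for no $m$ the family $\{B\in\cF:B\cap A_m^{o}\neq\emptyset\}$ is a $(d-1)$-collection, alternative~\ref{ite:p_2} holds trivially. Otherwise, bisect $A_m$ by the $d$ coordinate hyperplanes through its center: any connected $B$ meeting $A_m^{o}$ either meets one of these $d$ bisecting hyperplanes, or one of the $2d$ facet-hyperplanes of $A_m$, or else lies entirely in the interior of one of the $2^d$ sub-cubes. The first two groups are pierced by those finitely many hyperplanes, so by subadditivity some sub-cube hosts a $(d-1)$-collection of sets \emph{contained} in its interior. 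Iterating gives a nested sequence of cubes with diameters halving, whose intersection is the desired $p$. The key economy is that the cell-boundary hyperplanes themselves serve as the finite transversal for the ``large'' sets at every scale, so there is no need for a separate lemma about sets of diameter $\ge\lambda$ in a ball; in effect the paper proves your lemma for free by choosing a subdivision whose walls are already hyperplanes. Your route has the virtue of isolating that lemma as a standalone fact (and your grid/direction-net proof of it is fine), but for the present claim the cube-bisection shortcut is considerably cleaner and avoids the auxiliary confinement and diameter-bounding steps altogether.
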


\begin{proof}
 
 Let, for every $n\in\mathbb{N}$, $A_n=[-n,n]^d$.
 If there is an $m\in\mathbb{N}$ for which the collection $\mathcal{F}_m := \{B\in\mathcal{F}\mid B\cap A_m^{o}\neq\emptyset\}$ is a $(d-1)$-collection,
 then we shall show that there is a $p\in A_m$ for which \ref{ite:p_1} holds.
 If there is no such $m\in\mathbb{N}$, then clearly \ref{ite:p_2} holds, where $p$ is any point in $\mathbb{R}^d$.

 So let $\mathcal{F}_m$ be a $(d-1)$-collection.
 We split $A_m$ into $2^d$ boxes with axis-parallel hyperplanes $h_1,\dots,h_d$
 such that every box has a diameter that is at most half of the diameter of $A_m$.
 For at least one of these boxes, let us call it $A_{1,m}$, the collection $\mathcal{F}_{1,m} := \{B\in\mathcal{F}_m\mid B\subset A_{1,m}^{o}\}$ 
 is a $(d-1)$-collection.
 That is because, the sets in $\mathcal{F}_m$ that intersect $h_i$ for some $i\in[d]$ or the facets of $A_m$ cannot form a $(d-1)$-collection,
 because the collection of these sets are pierced by finitely many hyperplanes.
 So, the sets of $\mathcal{F}_m$ that lie in the interiors of the $2^d$ boxes created by $h_1,\dots,h_d$ intersecting $A_m$ must form a $(d-1)$-collection, which means that the sets of $\mathcal{F}_m$ that lie in at least one of the $2^d$ boxes form a $(d-1)$-collection.
 We denote such a box by $A_{1,m}$.
 We split $A_{1,m}$ into $2^d$ boxes with at most half the diamter of $A_{1,m}$ as before, and obtain $A_{2,m}$ and $\mathcal{F}_{2,m}$, as before, where $\mathcal{F}_{2,m}$ denotes the collection of sets in $\mathcal{F}_m$ that lie in the interior of $A_{2,m}$.

 Continuing this way, we get a nested sequence $\{A_{n,m}\}_{n\in\mathbb{N}}$ of boxes where $A_{n+1,m}$ has a diameter that is at most half that of $A_{n,m}$ for every $n\in\mathbb{N}$, 
 and corresponding to each $A_{n,m}$, we get a $(d-1)$-collection $\mathcal{F}_{n,m}$ such that $\mathcal{F}_{n+1,m}\subseteq\mathcal{F}_{n,m}$,
 and $\forall B\in\mathcal{F}_{n,m}$, we have $B\subset A_{n,m}^{o}$.
 Therefore, we have $\{p\}=\cap_{n\in\mathbb{N}} A_{n,m}$ for some $p\in\mathbb{R}^d$.
 Then, every open set containing $p$ contains $A_{n,m}$ for some $n\in\mathbb{N}$, which concludes the proof.
\end{proof}

Now we prove Theorem \ref{thm:hyperplane} when $d=1$.

\begin{claim}\label{cl:0th_case}
    Let $\cF$ be a collection of compact connected sets in $\mathbb{R}$.
    In other words, $\cF$ is a collection of closed and bounded intervals in $\RR$.
    Then if $\cF$ does not have a finite point transversal, then $\cF$ contains an infinite $0$-independent 
    sequence.
\end{claim}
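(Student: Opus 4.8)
Since for closed intervals on the line being $0$-independent is the same as being pairwise disjoint, the task is to extract from $\cF$ an infinite pairwise disjoint sequence. The plan rests on one small geometric engine, which I will call the \emph{concentration step}: if $p\in\RR$ is a point such that for every $\epsilon>0$ the subfamily $\{I\in\cF:I\subseteq B(p,\epsilon)\}$ has no finite point transversal, then $\cF$ contains an infinite pairwise disjoint subsequence. To see this, since $\{p\}$ is never a transversal of such a subfamily, pick $I_1\in\cF$ with $I_1\subseteq B(p,1)$ and $p\notin I_1$, and set $\delta_1:=\dist(p,I_1)$, which lies in $(0,1)$ because $I_1$ is compact, $p\notin I_1$, and $I_1\subseteq B(p,1)$; then pick $I_2\in\cF$ with $I_2\subseteq B(p,\delta_1)$ and $p\notin I_2$, set $\delta_2:=\dist(p,I_2)\in(0,\delta_1)$, and iterate. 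This yields $I_1,I_2,\dots\in\cF$ and $1>\delta_1>\delta_2>\cdots>0$ with $I_k\subseteq B(p,\delta_{k-1})$ and $\dist(p,I_k)=\delta_k$. For $j<k$, every point of $I_k$ is at distance $<\delta_{k-1}\le\delta_j$ from $p$ while every point of $I_j$ is at distance $\ge\delta_j$ from $p$, so $I_j\cap I_k=\emptyset$, as required.

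To reach a situation where the concentration step applies, I would first invoke Zorn's lemma: the pairwise disjoint subfamilies of $\cF$, ordered by inclusion, have a maximal element $\cM$ (a chain has its union as an upper bound). If $\cM$ is infinite we are done, so assume $\cM=\{J_1,\dots,J_m\}$ with $J_i=[c_i,d_i]$. By maximality every $I\in\cF$ meets some $J_i$, so $\cF=\bigcup_{i=1}^m\{I\in\cF:I\cap J_i\neq\emptyset\}$; since a finite union of families each possessing a finite transversal possesses one, some $\{I\in\cF:I\cap J_i\neq\emptyset\}$ has no finite transversal. Fix such an $i$. An interval $I$ meeting $[c_i,d_i]$ either contains $c_i$, or contains $d_i$, or (being connected and avoiding both endpoints) satisfies $I\subseteq(c_i,d_i)$; the first two kinds are transversed by $\{c_i\}$ and $\{d_i\}$, so $\cF':=\{I\in\cF:I\subseteq(c_i,d_i)\}$ has no finite transversal. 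Crucially, $\cF'$ is a family of compact connected sets all lying in the bounded set $(c_i,d_i)$.

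Now I would apply Claim~\ref{cl:convergence_point} in dimension $d=1$ to $\cF'$, which is a $(d-1)$-collection, obtaining a point $p$ satisfying alternative \ref{ite:p_1} or \ref{ite:p_2}. Alternative \ref{ite:p_2} is impossible here: choosing a bounded open set $U$ containing both $p$ and $[c_i,d_i]$ makes $\RR\setminus U$ disjoint from every member of $\cF'$, so the subfamily asserted by \ref{ite:p_2} would be empty and hence would have a finite transversal, a contradiction. Therefore \ref{ite:p_1} holds, and, specialized to the sets $U=B(p,\epsilon)$, it says exactly that $\{I\in\cF':I\subseteq B(p,\epsilon)\}$ has no finite transversal for every $\epsilon>0$. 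The concentration step, applied to $\cF'$ and this $p$, produces an infinite pairwise disjoint subsequence inside $\cF'\subseteq\cF$, which finishes the proof.

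The step I expect to be the real obstacle is precisely the passage to the bounded subfamily and the invocation of Claim~\ref{cl:convergence_point}. The hypothesis ``no finite transversal'' is not a compactness/finiteness condition, so the naive greedy idea (repeatedly delete the interval with the smallest right endpoint and pierce at that endpoint) stalls exactly when that infimum is not attained, i.e.\ when the troublesome intervals pile up at a single point; certifying and exploiting such an accumulation point is what Claim~\ref{cl:convergence_point} supplies through its nested-box construction, and everything else above is bookkeeping designed to hand Claim~\ref{cl:convergence_point} a family that is genuinely bounded yet still has no finite transversal. A secondary point to watch is the reduction itself: one must verify that after cutting $\cF$ down to the intervals contained in an open cell $(c_i,d_i)$, the resulting subfamily really does inherit the failure of having a finite transversal, which is where the simple ``connected set avoiding a point lies on one side'' observation is used.
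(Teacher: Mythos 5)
Your proof is correct, and its core engine --- locating a point $p$ at which non-pierceable subfamilies concentrate via Claim~\ref{cl:convergence_point}, alternative~\ref{ite:p_1}, and then greedily extracting nested intervals $I_k\subseteq B(p,\delta_{k-1})$ with $\delta_k=\dist(p,I_k)$ strictly decreasing --- is essentially the same as the paper's treatment of that alternative. Where you genuinely diverge is in how the ``escape to infinity'' case is handled: the paper simply runs a second, symmetric greedy argument under alternative~\ref{ite:p_2} (the finitely many intervals already chosen are bounded, so any member of a non-pierceable subfamily avoiding a large enough ball is disjoint from all of them), whereas you preempt that case entirely by first passing, via a Zorn's-lemma-maximal pairwise disjoint subfamily $\cM$ and the observation that an interval meeting $[c_i,d_i]$ but avoiding both endpoints must lie in $(c_i,d_i)$, to a bounded subfamily $\cF'$ with no finite transversal, for which alternative~\ref{ite:p_2} is vacuously impossible. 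Your reduction is valid (the maximality argument, the ``finite union of pierceable families is pierceable'' step, and the endpoint trichotomy all check out), and it buys a uniform argument with only one concentration case to analyze; the paper's route is shorter and avoids Zorn's lemma, at the cost of carrying two parallel greedy constructions. Either version proves the claim.
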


\begin{proof}
    By Claim \ref{cl:convergence_point}, there is a point $p\in\mathbb{R}$
    such that either \ref{ite:p_1} or \ref{ite:p_2} holds.
    If \ref{ite:p_1} holds, then for every $n\in\mathbb{N}$, there is an $0$-collection $\mathcal{F}_n\subset\mathcal{F}$ such that $\forall B\in\mathcal{F}_n$,
    we have $B\subset (p-\frac{1}{n},p+\frac{1}{n})$.
    Choose $B_1\in\mathcal{F}_1$ such that $p\notin B_1$. Then there is an $n_2\in\mathbb{N}$ for which $(p-\frac{1}{n_2},p+\frac{1}{n_2})\cap B_1=\emptyset$.
    Now choose $B_2\in\mathcal{F}_{n_2}$ such that $p\notin B_2$.
    In general, given $B_1,\dots,B_r\in\mathcal{F}$ such that $p\notin B_i$ for all $i\in[r]$, choose $n_{r+1}\in\mathbb{N}$ such that
    $(p-\frac{1}{n_{r+1}},p+\frac{1}{n_{r+1}})\cap B_j=\emptyset$, and choose $B_{r+1}\in\mathcal{F}_{n_{r+1}}$ such that $p\notin B_{r+1}$.
    Continuing this way, we obtain the sequence $\{B_n\}_{n\in\mathbb{N}}\subset\mathcal{F}$ such that $B_i\cap B_j=\emptyset$ for all $i,j\in\mathbb{N}$,
    $i\neq j$.
    If $\ref{ite:p_2}$ holds, then for every $n\in\mathbb{N}$, there is an $0$-collection $\mathcal{F}_n\subset\mathcal{F}$ such that $\forall B\in\mathcal{F}_n$,
    we have $B\cap (-n,n)=\emptyset$.
    Therefore, if we have $B_1,\dots,B_r\in\mathcal{F}$ such that $B_i\cap B_j=\emptyset$ whenever $i\neq j$, $i,j\in[r]$, we have an $n_{r+1}\in\mathbb{N}$ 
    for which $B_i\subset (-n_{r+1},n_{r+1})$ $\forall i\in[r]$.
    This means we have an $0$-collection $\mathcal{F}_{n_{r+1}}\subset\mathcal{F}$, and for any $B_{r+1}\in\mathcal{F}_{n_{r+1}}$, we have 
    $B_i\cap B_{r+1}=\emptyset$.
    This shows how we can extend any finite $0$-independent sequence to an infinite one in $\mathcal{F}$.
\end{proof}

\begin{claim}\label{cl:convergence_line}
    Let $d\in\mathbb{N}$ and $\mathcal{F}$ be a $(d-1)$-collection of compact connected sets in $\mathbb{R}^d$.
    Then there is a point $p\in\mathbb{R}^d$ and a ray $\vec{r}$ from $p$ such that at least one of the following holds:
    \begin{enumerate}
        \item\label{ite:R} for every $n\in\mathbb{N}$, the set $\{B\in\mathcal{F}\mid B\subset R(\vec{r},n)\}$ is a $(d-1)$-collection.
        \item\label{ite:Q} for every $n\in\mathbb{N}$, the set $\{B\in\mathcal{F}\mid B\subset Q(\vec{r},n)\}$ is a $(d-1)$-collection.
    \end{enumerate}
\end{claim}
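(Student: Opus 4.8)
The plan is to apply Claim~\ref{cl:convergence_point} to obtain the point $p_0$, set $p:=p_0$, and then locate the direction of the ray by a subdivision of the sphere $\mathbb{S}^{d-1}$ that plays, for directions, the role that box bisection played for positions in Claim~\ref{cl:convergence_point}. For a compact connected set $B$ with $p_0\notin B$, write $N(B):=\{(b-p_0)/\|b-p_0\|\mid b\in B\}\subseteq\mathbb{S}^{d-1}$; since $\|b-p_0\|\ge\dist(p_0,B)>0$ this is a continuous image of $B$, hence compact and connected. The reason $N(B)$ is the right object is the elementary reformulation of the two target conclusions: if $\vec r$ is the ray from $p_0$ in a direction $v\in\mathbb{S}^{d-1}$, then $B\subseteq R(\vec r,n)$ exactly when $B\subseteq B(p_0,1/n)$ and $N(B)$ lies in the spherical cap of angular radius $\arctan(1/n)$ about $v$, while $B\subseteq Q(\vec r,n)$ exactly when $\dist(p_0,B)\ge n$ and $N(B)$ lies in that same cap.

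Depending on which alternative of Claim~\ref{cl:convergence_point} holds for $p_0$, set $\mathcal{F}_n:=\{B\in\mathcal{F}\mid B\subseteq B(p_0,1/n)\}$ or $\mathcal{F}_n:=\{B\in\mathcal{F}\mid B\cap B(p_0,n)=\emptyset\}$; in both cases $\mathcal{F}_n$ is a $(d-1)$-collection with $\mathcal{F}_{n+1}\subseteq\mathcal{F}_n$, and after deleting from each $\mathcal{F}_n$ the sets that contain $p_0$ (all pierced by one fixed hyperplane through $p_0$, so their removal keeps a $(d-1)$-collection) $N(B)$ is defined for every remaining $B$. The heart of the proof is to construct a nested sequence of spherical caps $\mathbb{S}^{d-1}=C^{(0)}\supseteq C^{(1)}\supseteq C^{(2)}\supseteq\cdots$ with angular radii $r_j\to0$ so that, for every $j$ and every $n$, the family $\mathcal{F}^{(j)}_n:=\{B\in\mathcal{F}_n\mid N(B)\subseteq C^{(j)}\}$ is still a $(d-1)$-collection. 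Granting this, $\bigcap_j C^{(j)}=\{v\}$ for some $v$, and given $m$ one picks $j$ with $C^{(j)}$ inside the cap of angular radius $\arctan(1/m)$ about $v$; then $\mathcal{F}^{(j)}_m$ is a $(d-1)$-collection all of whose members lie in $R(\vec r,m)$ in the first case and in $Q(\vec r,m)$ in the second, which is exactly conclusion~\ref{ite:R} or conclusion~\ref{ite:Q}.

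The step from $C^{(j)}$ to $C^{(j+1)}$ is where the real work is, and I expect the main obstacle to be the following lemma: if $p_0\notin B$ and $N(B)$ has spherical diameter at least $\delta$, then $B$ is pierced by $h_w$, the hyperplane through $p_0$ with unit normal $w$, for every $w$ in some spherical cap of angular radius at least $c\delta$, where $c>0$ is an absolute constant. Indeed $h_w$ meets $B$ iff $N(B)$ meets $w^{\perp}$, and by connectedness of $N(B)$ this fails only for $w\in G\cup(-G)$ with $G:=\{w\mid\langle u,w\rangle>0\text{ for all }u\in N(B)\}$; choosing $u_1,u_2\in N(B)$ with $\langle u_1,u_2\rangle\le\cos\delta$ gives $G\subseteq\{w\mid\langle u_1,w\rangle>0,\ \langle u_2,w\rangle>0\}$, so the set of good normals contains the spherical wedge $\{w\mid\langle u_1,w\rangle\ge0\ge\langle u_2,w\rangle\}$, of dihedral angle $\angle(u_1,u_2)\ge\delta$, which contains a cap of radius $\ge c\delta$. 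Granting the lemma, given $C^{(j)}$ of radius $r_j$ apply it with $\delta=r_j/4$: the sets $B\in\mathcal{F}^{(j)}_n$ with $\mathrm{diam}\,N(B)\ge r_j/4$ are pierced by a fixed finite set of hyperplanes through $p_0$ (those whose normals form a $(cr_j/4)$-net of $\mathbb{S}^{d-1}$), so removing them keeps a $(d-1)$-collection; the survivors have $N(B)$ of diameter $<r_j/4$, hence contained in one of finitely many caps $D_1,\dots,D_M$ of radius $r_j/2$ covering $C^{(j)}$. Then $\bigcup_i\{B\in\mathcal{F}^{(j)}_n\mid N(B)\subseteq D_i\}$ is a $(d-1)$-collection for each $n$; since for fixed $i$ these decrease with $n$, comparing at some $n$ beyond the finitely many ``failure levels'' forces one index $i$ to work for all $n$, and $C^{(j+1)}:=C^{(j)}\cap D_i$ satisfies $r_{j+1}\le r_j/2$ and $\mathcal{F}^{(j+1)}_n=\{B\in\mathcal{F}^{(j)}_n\mid N(B)\subseteq D_i\}$, closing the induction.

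The remaining points are routine and I would not dwell on them: the spherical estimate that a wedge of dihedral angle $\ge\delta$ contains a cap of radius $\ge\tfrac14\delta$; the repeatedly used fact that deleting a subfamily with a finite hyperplane transversal from a $(d-1)$-collection leaves a $(d-1)$-collection, and that any superfamily of a $(d-1)$-collection is one; the two equivalences for $R(\vec r,n)$ and $Q(\vec r,n)$ from the first paragraph; and the degenerate case $d=1$, where $\mathbb{S}^0$ has two points, the lemma is vacuous, and the scheme reduces to Claim~\ref{cl:0th_case}.
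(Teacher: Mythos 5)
Your proof is correct, and the overall strategy coincides with the paper's: fix $p$ via Claim~\ref{cl:convergence_point}, then pin down a direction by iterated subdivision of the directions around $p$ together with a compactness argument, at each stage discarding the sets that straddle the subdivision because they admit a finite hyperplane transversal. The difference lies in the subdivision device. The paper covers $\RR^d$ by finitely many polyhedral cones $C^n_1,\dots,C^n_{m_n}$ with apex $p$, each contained in some $\mathrm{pos}(B(v_i,2^{-n}))$, and uses that a connected set not contained in the interior of a single cone must meet one of the finitely many bounding hyperplanes; it then intersects the compact sets $V_n$ of good directions. You instead work with the radial projection $N(B)\subseteq\SS^{d-1}$ and prove a quantitative lemma: if $N(B)$ has angular diameter at least $\delta$, the normals $w$ for which the hyperplane through $p$ with normal $w$ pierces $B$ contain a lune of dihedral angle at least $\delta$, hence a cap of radius at least $\delta/2$, so a fixed finite net of normals pierces all such $B$. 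This replaces the boundary-crossing observation and supports a clean nested-cap bisection; your stabilization of the surviving cap index over all scales $n$ (the nested nonempty finite index sets) plays the role of the paper's $\bigcap_n V_n\neq\emptyset$. Your route is longer but more self-contained and quantitative; the paper's is shorter because the cones' facets serve directly as the discarding transversal. Two details to tighten when writing it up: the caps $D_1,\dots,D_M$ of radius $r_j/2$ should be centered at an $(r_j/4)$-net of $C^{(j)}$ (mere covering does not guarantee that every subset of diameter less than $r_j/4$ lies inside one of them), and in the final step one should note explicitly that nested nonempty compact sets $C^{(j)}$ with radii tending to $0$ are eventually contained in any prescribed cap around their common point $v$.
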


\begin{proof}
    Since $\mathcal{F}$ is a $(d-1)$-collection of compact connected sets, Claim \ref{cl:convergence_point} holds.
    Let $p$ be the point described in Claim \ref{cl:convergence_point}.
    Without loss of generality, let $p$ be the origin $O$.
    For every $n\in\mathbb{N}$, let $\mathcal{F}_n$ be a subset of $\mathcal{F}$ where $\mathcal{F}_n\supseteq\mathcal{F}_{n+1}$, and $\mathcal{F}_n$ is a $(d-1)$-collection for every $n\in\mathbb{N}$.
    For every $n\in\mathbb{N}$, $\mathbb{R}^d$ can be written as a finite union of closed unbounded cones $C^n_1,\dots,C^n_{m_n}$ from the origin that are intersections of finitely many closed half-planes, and each $C^n_i$ is contained in some $\mathrm{pos}(B(v_i,\frac{1}{2^n}))$, $v_i\in\mathbb{S}^{d-1}$, $i\in[m_n]$, . 
    The subset of $\mathcal{F}_n$ that consists of sets that intersect the boundaries of some $C^n_i$, $i\in[m_n]$, cannot be a $(d-1)$-collection, as each $C^n_i$ is the intersection of finitely many half-planes.
    Therefore, at least for one of the cones $C^n_i$, the collection $\{B\in\mathcal{F}_n\mid B\subset {C^n_i}^o\}$ is a $(d-1)$-collection.
    Let $I_n\subseteq[m_n]$ denote the set of all $i\in[m_n]$ for which the collection $\{B\in\mathcal{F}_n\mid B\subset {C^n_i}^o\}$ is a $(d-1)$-collection.
    Let $V_n:=\cup_{i\in I_n}(\mathbb{S}^{d-1}\cap C^n_i)$.
    Note that for every $n\in\mathbb{N}$, $\cap_{j\in[n]}V_j\neq\emptyset$.
    This can be seen from the fact that if $C$ is an unbounded cone in $\mathbb{R}^d$ from the origin, $n\in\mathbb{N}$, and $\{B\in\mathcal{F}_{n'}\mid B\subset C\}$ is a $(d-1)$-collection for some $n'\geq n$, then there are $i_1,\dots,i_n$ with $i_j\in[m_j]$ $\forall j\in[n]$ such that $C'=C\cap(\cap_{j\in[n]}C^j_{i_j})$ is nonempty, and $\{B\in\mathcal{F}_{n'}\mid B\subset C'\}$ is a $(d-1)$-collection.

    As $V_n$ is compact for each $n\in\mathbb{N}$ and $\cap_{i\in[n]}V_i\neq\emptyset$, we have $\cap_{n\in\mathbb{N}}V_n\neq\emptyset$.
    Choose any $v\in \cap_{n\in\mathbb{N}}V_n\neq\emptyset$.
    Any unbounded cone $C$ from the origin with $v$ in its interior also contains $C^n_i$ for some $n\in\mathbb{N}$ and $i\in I_n$, and hence, $\{B\in\mathcal{F}_n\mid B\subset C\}$ is a $(d-1)$-collection for all $n\in\mathbb{N}$, as $\mathcal{F}_n\supseteq\mathcal{F}_{n+1}$ $\forall n\in\mathbb{N}$.

    Define $\vec{r}:=\{tv\mid t\geq 0\}$.
    If Claim \ref{cl:convergence_point}, \ref{ite:p_1} holds, then define $\mathcal{F}_n:=\{B\in\mathcal{F}\mid B\subset B(O,\frac{1}{n})\}$.
    From the above argument, we have that $\{B\in\mathcal{F}_n\mid B\subset R(\vec{r},n)\}=\{B\in\mathcal{F}_n\mid B\subset \mathrm{pos}(B(v,\frac{1}{n}))\}$ is a $(d-1)$-collection.
    If Claim \ref{cl:convergence_point}, \ref{ite:p_2} holds, then define $\mathcal{F}_n:=\{B\in\mathcal{F}\mid B\subset \mathbb{R}^d\setminus B(O,n)\}$.
    Then, the above reasoning shows that $\{B\in\mathcal{F}_n\mid B\subset Q(\vec{r},n)\}=\{B\in\mathcal{F}_n\mid B\subset \mathrm{pos}(B(v,\frac{1}{n}))\}$ is a $(d-1)$-collection.
\end{proof}

In our next two proofs, we use the following lemma by Keller and Perles \cite{KellerP22}.

\begin{lemma}\label{lem:KP}
    Let $d\in\NN$, $k\in\{0,\dots,d-1\}$, $m\in\NN$ and $\cF$ be a collection of compact convex sets.
    If every finite subset of $\cF$ has a $k$-transversal of size $m$, then $\cF$ has a $k$-transversal of 
    size $m$.
\end{lemma}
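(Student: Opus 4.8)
The plan is to prove \Cref{lem:KP} by a compactness (finite intersection property) argument, the one subtlety being the choice of a compactification of the space of $k$-flats in which the ``improper'' limiting flats cause no harm. I would homogenize: embed $\RR^d$ into $\RR^{d+1}$ via $x\mapsto(x,1)$, write $\widehat A:=\{(x,1):x\in A\}$ for $A\subseteq\RR^d$, and assign to a $k$-flat $L\subseteq\RR^d$ the $(k+1)$-dimensional linear subspace $\Phi(L):=\mathrm{span}(\widehat L)\subseteq\RR^{d+1}$. This does two things at once. First, hitting is recorded faithfully: $L\cap B\neq\emptyset$ if and only if $\Phi(L)\cap\widehat B\neq\emptyset$, and conversely every $(k+1)$-subspace $W$ not contained in the hyperplane $\{x_{d+1}=0\}$ equals $\Phi(L)$ for $L:=W\cap\{x_{d+1}=1\}$ read back inside $\RR^d$. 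Second, $\Phi(L)$ lives in the \emph{compact} Grassmannian $\mathrm{Gr}(k+1,d+1)$, and the only subspaces not in the image of $\Phi$ are those inside $\{x_{d+1}=0\}$ --- which meet no $\widehat B$ at all, since every point of $\widehat B$ has last coordinate $1$. So degenerate limits are automatically ``empty flats'' and can simply be discarded.

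With this in hand, for each $B\in\cF$ set
\[
  C_B:=\bigl\{(W_1,\dots,W_m)\in\mathrm{Gr}(k+1,d+1)^m \ :\ W_i\cap\widehat B\neq\emptyset \text{ for some } i\in[m]\bigr\}.
\]
The first step is to check $C_B$ is closed; it suffices to show $\{W\in\mathrm{Gr}(k+1,d+1):W\cap\widehat B\neq\emptyset\}$ is closed, since $C_B$ is then a finite union of products of this set with copies of the Grassmannian. If $W_j\to W$ and $p_j\in W_j\cap\widehat B$, then compactness of $\widehat B$ gives a subsequence with $p_j\to p\in\widehat B$, and since $W_j\to W$ while the $p_j$ stay bounded, a routine orthogonal-projection estimate yields $p\in W$, so $p\in W\cap\widehat B$. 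The second step is to observe that $\{C_B\}_{B\in\cF}$ has the finite intersection property: for a finite $\cF'\subseteq\cF$ the hypothesis supplies $k$-flats $L_1,\dots,L_m$ (with repetitions allowed) hitting every member of $\cF'$, and then $(\Phi(L_1),\dots,\Phi(L_m))\in\bigcap_{B\in\cF'}C_B$.

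Since $\mathrm{Gr}(k+1,d+1)^m$ is compact, it follows that $\bigcap_{B\in\cF}C_B\neq\emptyset$; fix $(W_1^{*},\dots,W_m^{*})$ in this intersection. Any $W_i^{*}$ lying inside $\{x_{d+1}=0\}$ meets no $\widehat B$, so I discard those indices and replace each surviving $W_i^{*}$ by the genuine $k$-flat $L_i^{*}:=W_i^{*}\cap\{x_{d+1}=1\}$ inside $\RR^d$, obtaining at most $m$ flats. For every $B\in\cF$ some $W_i^{*}$ meets $\widehat B$, that index survives, and then $L_i^{*}\cap B\neq\emptyset$; hence these flats form a $k$-transversal of $\cF$ of size at most $m$ (pad to size exactly $m$ if one wants equality).

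I expect the only place requiring care is the first paragraph --- making the ``improper flats are empty'' mechanism precise so that the extracted limit really decodes to honest flats --- together with the closedness verification; the rest is the standard finite-intersection-property packaging. As a sanity remark, convexity is never used here, only compactness of the members of $\cF$; and in the special case $k=d-1$ one can bypass the Grassmannian entirely by representing a hyperplane as a pair $(a,b)\in\SS^{d-1}\times[-\infty,+\infty]$, with $b=\pm\infty$ serving as the empty hyperplane.
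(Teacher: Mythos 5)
Your argument is correct. Note first that the paper itself does not prove Lemma~\ref{lem:KP} at all --- it is imported verbatim from Keller and Perles \cite{KellerP22} --- so there is no in-paper proof to compare against; what you have written is a legitimate self-contained substitute. The structure is the standard finite-intersection-property compactness argument, and the one genuinely delicate point (flats escaping to infinity under limits) is handled cleanly by your homogenization: lifting a $k$-flat $L$ to the $(k+1)$-dimensional linear span of $\widehat L$ identifies the space of $k$-flats with an open subset of the compact Grassmannian $\mathrm{Gr}(k+1,d+1)$, and the boundary points (subspaces inside $\{x_{d+1}=0\}$) meet no lifted set, so they are harmless. The closedness of $\{W : W\cap\widehat B\neq\emptyset\}$ via convergence of orthogonal projections, and the decoding $\Phi(L)\cap\{x_{d+1}=1\}=\widehat L$ (an affine combination with coefficients summing to $1$ of points of $L$ lies in $L$), both check out. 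Your closing remark is not just a sanity check but actually matters for this paper: the lemma is stated for compact \emph{convex} sets, yet in Theorems~\ref{thm:infinite_pq_hyperplane_bdd} and~\ref{thm:infinite_pq_hyperplane_unbdd} it is applied to the families $\cF_n$ of compact \emph{connected} sets. Your proof uses only compactness of the members of $\cF$, so it covers the form in which the lemma is actually invoked without any detour through convex hulls.
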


\begin{theorem}\label{thm:infinite_pq_hyperplane_bdd}
  Let $\mathcal{F}$ be a $(d-1)$-collection for which Claim \ref{cl:convergence_line}, \ref{ite:R}, holds.
  Then there is a sequence $\{B_n\}_{n\in\mathbb{N}}\subset\mathcal{F}$, such that 
  \begin{enumerate}
    \item\label{ite:point} there is no hyperplane piercing every one of $B_{i_1},\dots,B_{i_{d}},\{p\}$ for distinct $i_j\in\mathbb{N}$, $j\in[d]$, 
      and $p$ denotes the point as per the notations of Claim \ref{cl:convergence_line}, \ref{ite:R}.
    \item\label{ite:hyp} there is no hyperplane piercing every one of $B_{i_1},\dots,B_{i_{d+1}}$ for distinct $i_j\in\mathbb{N}$, $j\in[d+1]$,
  \end{enumerate}
\end{theorem}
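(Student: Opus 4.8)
The two conclusions of the statement can be bundled into one: conditions (\ref{ite:point}) and (\ref{ite:hyp}) together assert precisely that the family $\{\{p\}\}\cup\{B_n\}_{n\in\NN}$ is $(d-1)$-independent, because any hyperplane through $d+1$ of its members either contains $\{p\}$ (case (\ref{ite:point})) or does not (case (\ref{ite:hyp})). I would prove this by induction on $d$; the base case $d=1$ is Claim~\ref{cl:0th_case}. After translating so that $p=O$ and rotating so that the ray $\vec r$ of Claim~\ref{cl:convergence_line}, case~(\ref{ite:R}), points along $e_d$, I first refine $\cF$: the sets of $\cF$ meeting $\vec r$ admit a one-hyperplane transversal (any hyperplane containing $\vec r$), so for each $n$ the family $\cF_n:=\{B\in\cF : B\subseteq R(\vec r,n),\ B\cap\vec r=\emptyset\}$ is still a $(d-1)$-collection, and $\cF_1\supseteq\cF_2\supseteq\cdots$. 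Note that $B\in\cF_n$ then lies in the open cone $\{x_d>0\}$.

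The heart of the argument is a dimension-reducing shadow. For a set $B$ in $\{x_d>0\}$ put $S_B:=\{(x_1,\dots,x_{d-1})/x_d : x\in B\}\subseteq\R^{d-1}$, which is compact and connected, and satisfies $S_B\subseteq\overline B(0,1/n)$ and $O\notin S_B$ whenever $B\in\cF_n$. The "separation-theorem consequence'' advertised in the introduction is exactly what makes this work: a hyperplane through $O$ in $\R^d$ meets $B$ if and only if a corresponding affine hyperplane in $\R^{d-1}$ meets $S_B$, and this sets up a bijection between the hyperplanes of $\R^d$ through $O$ (other than $\{x_d=0\}$, which meets no such $B$) and the affine hyperplanes of $\R^{d-1}$, carrying transversals to transversals. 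Hence $\{S_B : B\in\cF_1\}$ has no finite hyperplane transversal, i.e.\ it is a $(d-2)$-collection in $\R^{d-1}$, it "concentrates at $O$'' (for every $\eps>0$ the shadows lying inside $\overline B(0,\eps)$ still include the $(d-2)$-collection $\{S_B : B\in\cF_{\lceil 1/\eps\rceil}\}$), and condition~(\ref{ite:point}) for a sequence $\{B_n\}$ is equivalent to $(d-2)$-independence of $\{S_{B_n}\}$. Applying the inductive hypothesis in dimension $d-1$ to this $(d-2)$-collection produces an infinite subsequence of the $B_n$ whose slope shadows are $(d-2)$-independent, which secures (\ref{ite:point}) for the whole subsequence.

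For (\ref{ite:hyp}) I would then thin the subsequence further, using a compactness estimate in which the separation idea appears a second time. Since (\ref{ite:point}) forbids any hyperplane through $O$ from meeting $d$ of the sets, only hyperplanes $h$ with $\dist(O,h)>0$ can meet $d+1$ of them. But for fixed sets $B_{i_1},\dots,B_{i_d}$ the collection of hyperplanes meeting all of them is compact (their offsets are bounded because the sets are bounded) and, by (\ref{ite:point}), contains no hyperplane through $O$; hence it stays at distance at least some $\delta(B_{i_1},\dots,B_{i_d})>0$ from $O$. Choosing the sets so that each newly added one lies in $\overline B(O,1/n)$ with $1/n$ below the minimum of these $\delta$'s over all $d$-subsets chosen so far, no hyperplane can meet a new set together with $d$ earlier ones, so no hyperplane meets $d+1$ sets of the final sequence. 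Lemma~\ref{lem:KP}, which transfers the absence of small transversals from finite subfamilies to the whole family, is what lets me keep checking that the successive refinements (and the convex-hull surrogates used along the way) remain $(d-1)$- and $(d-2)$-collections.

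The main obstacle is the interface between these two halves. The dimensional reduction controls (\ref{ite:point}) but is blind to scale: a small slope shadow only means $B$ sits in a thin cone about $\vec r$, not that $B$ is close to $O$, whereas the compactness argument for (\ref{ite:hyp}) needs the later sets to be genuinely close to $O$ (deep inside $R(\vec r,\cdot)$). Reconciling these forces me to prove a strengthened inductive statement that, besides the "$\{p\}$-version'', also guarantees the produced sets can be required to lie inside any prescribed neighbourhoods of $p$, and to arrange the shadow reduction so that the thin shadows delivered in dimension $d-1$ pull back to sets of unbounded depth in dimension $d$. Getting this bookkeeping to close — so that $(d-2)$-independence of the shadows and the depth growth $n(B_n)\to\infty$ hold simultaneously — is the delicate step, and is exactly where the strengthened hypothesis and Lemma~\ref{lem:KP} earn their keep.
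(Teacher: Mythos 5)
Your overall strategy matches the paper's --- induction on $d$, a dimension-reducing projection to secure item~(\ref{ite:point}), and then a one-set-at-a-time compactness argument (hyperplanes through $d$ already-chosen sets form a compact family avoiding $p$, hence staying at distance $\delta>0$ from $p$, so a new set inside $B(p,1/n)$ with $1/n<\delta$ is safe) to secure item~(\ref{ite:hyp}); this second half is essentially the paper's \emph{Step-II} verbatim. Where you genuinely diverge is the projection. The paper projects \emph{orthogonally} onto the hyperplane $h$ through $p$ perpendicular to $\vec r$; because orthogonal projection does not set up a bijection between hyperplanes through $p$ and hyperplanes of $h$, the inductive output (that no $(D-2)$-flat passes through $p$ and $D-1$ of the projections) only rules out hyperplanes through $p$ that \emph{contain} $\vec r$, and the paper needs an extra compactness/cone argument (its \emph{Step-I}) to kill the remaining hyperplanes through $p$ by pushing new sets deep into $R(\vec r,n')$. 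Your central (gnomonic) projection $x\mapsto (x_1,\dots,x_{d-1})/x_d$ makes condition~(\ref{ite:point}) \emph{exactly equivalent} to $(d-2)$-independence of the shadows, so you get~(\ref{ite:point}) for the whole subsequence in one stroke, eliminate Step-I, and do not even need the ``$\{p\}$-version'' of the inductive conclusion. That is a real simplification, and your verification of the hyperplane correspondence and of $S_B\subseteq\overline B(O,1/n)$ for $B\in\cF_n$ is correct.

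The one place you stop short is the ``interface'' issue you flag at the end: the inductive call on the shadow family returns an infinite subsequence, but nothing yet forces those particular sets to have depth $n(B)\to\infty$, which your~(\ref{ite:hyp}) step needs. You propose strengthening the inductive statement; that would work, but it is unnecessary. The paper's resolution is lighter: by Lemma~\ref{lem:KP}, each $(d-1)$-collection $\cF_n$ contains a \emph{finite} subfamily $\cS_n$ requiring at least $n$ hyperplanes, and $\cS:=\bigcup_n\cS_n$ is a countable $(d-1)$-collection in which $\cS_1\cup\dots\cup\cS_N$ is finite for every $N$. Running the projection and the inductive call inside $\cS$ rather than $\cF$ guarantees that \emph{every} infinite subsequence has all but finitely many members in $\cF_N$ for each $N$, so after reindexing one may assume $B'_n\in\cF_n$, and your thinning argument for~(\ref{ite:hyp}) then goes through with no change to the inductive statement. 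With that substitution your proof is complete and, in the first half, cleaner than the paper's.
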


\begin{proof}
  
  We have already proved the case where $d=1$ in Theorem \ref{cl:0th_case}.
  So, let us assume that the theorem is true for all $1\leq d<D$.
  Let $\mathcal{F}$ be a $(D-1)$-collection in $\mathbb{R}^D$.
  Since for $\mathcal{F}$, Claim \ref{cl:convergence_line}, \ref{ite:R}, holds, 
  $\forall n\in\mathbb{N}$, there is a subcollection $\mathcal{F}_n\subset\mathcal{F}$ which is a $(D-1)$-collection, and 
  $\mathcal{F}_n:=\{B\in\mathcal{F}\mid B\subset R(\vec{r},n), B\cap\vec{r}=\emptyset\}$, where $\vec{r}$ is the ray in the notation of Claim \ref{cl:convergence_line}, \ref{ite:R}.

  Let, for each $n\in\mathbb{N}$, $\mathcal{S}_n\subset\mathcal{F}_n$ be a finite collection such that if $H_n$ is a hyperplane transversal of $\mathcal{S}_n$,
  then $|H_n|\geq n$.
  Lemma \ref{lem:KP} ensures that such a $\mathcal{S}_n$ exists.
  Then, $\mathcal{S}:=\cup_{n\in\mathbb{N}}\mathcal{S}_n$ is a $(D-1)$-collection.

  Without loss of generality, we can assume that $\forall B\in\mathcal{F}$, $B\cap\vec{r}=\emptyset$.
  Let $h$ be the hyperplane through $p$ that is perpendicular to $\vec{r}$, and let $\pi:\mathbb{R}^D\to h$ be the 
  projection map.
  Note that Claim \ref{cl:convergence_point}, \ref{ite:p_1} holds for $\{\pi(B)\mid B\in\mathcal{S}\}$ 
  as well, where $d=D-1$ and $p$ is the point referred to in Claim \ref{cl:convergence_point}, \ref{ite:p_1}, since $\pi(B(p,\frac{1}{n}))$ is a ball in $h$ centered at $p$ with radius $\frac{1}{n}$. 
  This implies Claim \ref{cl:convergence_line}, \ref{ite:R} for $\{\pi(B)\mid B\in\mathcal{S}\}$, $d=D-1$. 
  Then, as per our induction hypothesis, there is an infinite sequence $\{B'_n\}_{n\in\mathbb{N}}\subset\mathcal{S}$ such that
  $\{\pi(B'_n)\}_{n\in\mathbb{N}}$ is $(D-2)$-independent, and there is no $(D-2)$-flat passing through $\{p\},\pi(B'_{i_1}),\dots,\pi(B'_{i_{D-1}})$
  for any distinct $i_1,\dots,i_{D-2}\in\mathbb{N}$.
  Without loss of generality, assume that $\forall n\in\mathbb{N}$, $B'_n\in\mathcal{F}_n$.
  
  Define $B_1:=B'_1,\dots,B_{D-1}:=B'_{D-1}$.
  We show that we can extend $B_1,\dots,B_{D-1}$ to an infinite $(D-1)$-independent sequence.
  Consider the general case: let $B_1,\dots,B_m\in\{B'_n\}_{n\in\mathbb{N}}$ be distinct sets which satisfy \ref{ite:point} and \ref{ite:hyp},
  where $m\geq D-1$.
  When $m=D-1$, \ref{ite:point} and \ref{ite:hyp} are vacuously true.\\

  \noindent\textit{Step-I}: we show that there is a $n'\in\mathbb{N}$ such that for any $B'_n$, $n\geq n'$, the sequence $B_1,\dots,B_m,B'_n$ satisfies \ref{ite:point}.
  
  Let $B_{i_1},\dots,B_{i_{D-1}}$ be distinct sets with $i_j\in[m]$ $\forall j\in[D-1]$.
  Let $h'$ be a hyperplane through $B_1,\dots,B_{D-1},\{p\}$.
  Note that $\vec{r}\cap h'=\{p\}$, because if not, then $\vec{r}$ would be contained in $h'$, which would mean that $h'$ is perpendicular 
  to $h$ and $\pi(h')$ is a $(D-2)$-flat piercing $\{p\},\pi(B_{i_1}),\dots,\pi(B_{i_{D-1}})$, a contradiction.
  $\vec{r}$ can be written as $\{p+tv\mid v\in\mathbb{S}^{d-1}\}$.
  Note that for every $b_j\in B_{i_j}$, $j\in[D-1]$, the hyperplane through $p,b_1,\dots,b_{D-1}$ is uniquely defined, because if the dimension of $\mathrm{aff}(\{p,b_1,\dots,b_{D-1}\})$ was smaller than $D-1$, we would have had a hyperplane through $p,b_1,\dots,b_{D-1}$ that contains $\vec{r}$.
  Therefore, as $B_{i_1},\dots,B_{i_{D-1}},\{p\}$ are compact sets and no hyperplane through them passes through $p+v$, which is a point on $\vec{r}$, there is a closed ball $\overline{B}(v,\frac{1}{n'})$ such that $p+\mathrm{pos}(\overline{B}(v,\frac{1}{n'}))\cap h'=\{p\}$ for every hyperplane $h'$ that passes through $B_{i_1},\dots,B_{i_{D-1}},\{p\}$.
  Since there are only finitely many choices for distinct $i_1,\dots,i_{D-1}$ from $[m]$, we can assume, without loss of generality, that this $n'$ works for every choice of distinct $i_1,\dots,i_{D-1}$ chosen from $[m]$.
  Since for every $n\in\mathbb{N}$, $n\geq n'$, we have $R(\vec{r},n)\subset p+\mathrm{pos}(\overline{B}(v,\frac{1}{n'}))$, for any $B'_n$ with $n\geq n'$, the sequence $B_1,\dots,B'_n$ satisfies \ref{ite:point}.\\
  
  \noindent\textit{Step-II}: we show that there is an $n''\in\mathbb{N}$ such that for any $B'_n$, $n\geq n''$, the sequence $B_1,\dots,B_m,B'_n$ satisfies \ref{ite:hyp}.
  
  If $m=D-1$, then for any choice of $n\in\mathbb{N}$ with $n>D-1$, the statement is vacuously true. In this case, we apply \textit{Step-I} to obtain $B_D$. 
  So, let us assume that $m\geq D$.
  Choose any $B_{i_1},\dots,B_{i_D}$ for distinct $i_j\in[m]$, $j\in[D]$.
  Then, as the sequence $B_1,\dots,B_m$ satisfies \ref{ite:point}, no hyperplane through $B_{i_1},\dots,B_{i_D}$ contains $p$.
  Therefore, there is an $n''\in\mathbb{N}$ such that $B(p,\frac{1}{n})\cap h'=\emptyset$ for every hyperplane $h'$ that passes through $B_{i_1},\dots,B_{i_D}$.
  As there are only finitely many choices for $i_1,\dots,i_D$, without loss of generality, assume that $n'$ works for every choice of $i_1,\dots,i_D$ chosen from $[m]$.
  Then, the sequence $B_1,\dots,B_m,B'_n$ satisfies \ref{ite:hyp} for every $n\in\mathbb{N}$, $n\geq n''$.\\

  Now, let $n_m\in\mathbb{N}$ be such that $n_m>\mathrm{max}(n',n'')$.
  Define $B_{m+1}:=B'_{n_m}$.
  Then $B_1,\dots,B_{m+1}$ satisfies both \ref{ite:point} and \ref{ite:hyp}.
  This shows that we can extend the sequence $B_1,\dots,B_m$.
\end{proof}

In the above proof, we had to show that we can choose sets from $\mathcal{F}$ such that the hyperplanes passing through any distinct $d$ of those
do not contain $p$, because we have $(d-1)$-collections in $\mathcal{F}$ clustering around $p$.
When Claim \ref{cl:convergence_line}, \ref{ite:Q} holds, we have $(d-1)$-collections in $\mathcal{F}$ as far away from $p$ as we want.
So, our proof for this case is simpler.

We state and prove this case now.

\begin{theorem}\label{thm:infinite_pq_hyperplane_unbdd}
  Let $\mathcal{F}$ be a $(d-1)$-collection for which Claim \ref{cl:convergence_line}, \ref{ite:Q}, holds.
  Then there is a $(d-1)$-independent sequence $\{B_n\}_{n\in\mathbb{N}}\subset\mathcal{F}$, such that 
  there is no hyperplane piercing every one of $B_{i_1},\dots,B_{i_{d+1}}$ for distinct $i_j\in\mathbb{N}$, $j\in[d+1]$,
\end{theorem}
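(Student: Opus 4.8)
The plan is to reduce this case to the bounded case, Theorem~\ref{thm:infinite_pq_hyperplane_bdd}, by applying a projective involution that converts the configuration of Claim~\ref{cl:convergence_line}, \ref{ite:Q} --- $(d-1)$-collections escaping to infinity along $\vec r$ --- into $(d-1)$-collections packed near a finite point, which is exactly the setting Theorem~\ref{thm:infinite_pq_hyperplane_bdd} is designed for.

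I would first normalise: take $p=O$ and $\vec r=\{tv\mid t\geq 0\}$ for a unit vector $v$, and write each point of $\RR^d$ as $tv+w$ with $w\perp v$, so that $Q(\vec r,n)=\{tv+w\mid t\geq 0,\ w\perp v,\ \|w\|\leq t/n\}\setminus B(O,n)$. By the hypothesis, $\mathcal{F}_1:=\{B\in\mathcal{F}\mid B\subseteq Q(\vec r,1)\}$ is a $(d-1)$-collection, and every set in $\mathcal{F}_1$ lies in the open half-space $\{t>0\}$ (the only point of the cone with $t=0$ is $O$, which has been deleted). On $\{t>0\}$ define $T(tv+w):=\tfrac1t v+\tfrac1t w$; this is the restriction to the standard affine chart of the projective transformation of $\mathbb{RP}^d$ that exchanges the ``constant'' homogeneous coordinate with the one along $v$. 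Hence $T$ is an involutive homeomorphism of $\{t>0\}$ onto itself, it maps every affine hyperplane other than $v^\perp$ onto an affine hyperplane (with $v^\perp$ the single hyperplane ``thrown to infinity''), and $v^\perp$ is disjoint from every set contained in the cone $Q(\vec r,\cdot)$. Two direct computations do the work: (i) if $tv+w\in Q(\vec r,n)$, then $\|w\|\leq t/n$ already forces $t>n/\sqrt2$, so $T(tv+w)=sv+w'$ with $0<s<\sqrt2/n$ and $\|w'\|\leq 1/n$ --- thus $T(Q(\vec r,n))$ lies in a region about $O$ whose diameter tends to $0$ as $n\to\infty$, and in particular $T(Q(\vec r,1))$ is bounded; and (ii) $T$ carries the hyperplane $\{\alpha t+\langle\beta,w\rangle=\gamma\}\ne v^\perp$ onto the hyperplane $\{\gamma s-\langle\beta,w'\rangle=\alpha\}$, and --- since all sets in play avoid $v^\perp$ --- the induced correspondence between hyperplanes preserves incidence with those sets in both directions.

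Granting (i) and (ii): $T$ sends each set of $\mathcal{F}_1$ to a compact connected set, so $\mathcal{G}:=\{T(B)\mid B\in\mathcal{F}_1\}$ is a family of compact connected sets in $\RR^d$; it is a $(d-1)$-collection because by (ii) the finite hyperplane transversals of $\mathcal{G}$ and of $\mathcal{F}_1$ correspond; and by (i) all members of $\mathcal{G}$ lie in one fixed bounded set. Now Claim~\ref{cl:convergence_line} applies to $\mathcal{G}$, and its alternative~\ref{ite:Q} is impossible for a bounded family (for large $n$ the region $Q(\vec\rho,n)$ is disjoint from any fixed bounded set, so the associated family is empty, hence not a $(d-1)$-collection), so Claim~\ref{cl:convergence_line}, \ref{ite:R}, holds for $\mathcal{G}$. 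Theorem~\ref{thm:infinite_pq_hyperplane_bdd} then applies to $\mathcal{G}$ and produces a sequence $\{C_n\}_{n\in\NN}\subseteq\mathcal{G}$ such that no hyperplane pierces $d+1$ distinct sets among the $C_n$ (its conclusion~\ref{ite:hyp}; conclusion~\ref{ite:point} is not needed here). Set $B_n:=T^{-1}(C_n)\in\mathcal{F}_1\subseteq\mathcal{F}$. If some hyperplane pierced $d+1$ distinct sets $B_{i_1},\dots,B_{i_{d+1}}$, then, since these lie in $\{t>0\}$, that hyperplane is not $v^\perp$, so by (ii) its $T$-image is an affine hyperplane piercing each of $C_{i_1},\dots,C_{i_{d+1}}$, contradicting the choice of $\{C_n\}$. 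Therefore $\{B_n\}$ is the desired $(d-1)$-independent sequence in $\mathcal{F}$.

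I expect the only genuinely delicate step to be (ii): because $T$ is not affine, one must verify that it nevertheless preserves the ``hyperplane through $d+1$ sets'' structure. This goes through precisely because the one hyperplane $T$ mishandles --- $v^\perp$ --- never meets any of the sets under consideration, as they all lie inside $Q(\vec r,\cdot)\subseteq\{t>0\}$; once that is observed, the incidence bookkeeping for $T$ is routine, and the theorem follows by a direct appeal to the already-established Theorem~\ref{thm:infinite_pq_hyperplane_bdd}, which is why this case is simpler than the bounded one.
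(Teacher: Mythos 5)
Your proof is correct, and it takes a genuinely different route from the paper's. The paper proves this theorem by a fresh induction on the dimension: it projects the family orthogonally onto the hyperplane through $p$ perpendicular to $\vec{r}$, extracts a $(D-2)$-independent sequence of projections via the induction hypothesis or Theorem~\ref{thm:infinite_pq_hyperplane_bdd}, and then extends a finite prefix one set at a time, relying on a separate geometric lemma (Claim~\ref{cl:Q_r}) which says that every hyperplane through $d$ of the already-chosen compact sets eventually misses $Q(\vec{r},N)$, so a new set taken deep inside the cone cannot be reached. You instead collapse the unbounded case onto the already-proved bounded case by the projective involution $T(tv+w)=\tfrac{1}{t}v+\tfrac{1}{t}w$: your computations that $t\geq n/\sqrt{2}$ on $Q(\vec{r},n)$ (so the image of $Q(\vec{r},1)$ is bounded), that the coefficient map $(\alpha,\beta;\gamma)\mapsto(\gamma,-\beta;\alpha)$ makes the only exceptional hyperplane $v^{\perp}$, which meets no set under consideration, that finite hyperplane transversals of $\mathcal{F}_1$ and $\mathcal{G}$ therefore correspond (so $\mathcal{G}$ is a $(d-1)$-collection of compact connected sets), and that alternative~\ref{ite:Q} of Claim~\ref{cl:convergence_line} is impossible for a bounded family, are all sound; and since Theorem~\ref{thm:infinite_pq_hyperplane_bdd} is established by an induction internal to itself, invoking it here creates no circularity. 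What the paper's route buys is an argument that stays entirely within affine geometry and orthogonal projection and that produces the auxiliary point-avoidance structure (conclusion~\ref{ite:point} of Theorem~\ref{thm:infinite_pq_hyperplane_bdd}) its own induction needs; what yours buys is the complete elimination of Claim~\ref{cl:Q_r} and of the second induction, at the price of a non-affine change of coordinates whose one delicate point --- the hyperplane thrown to infinity --- you correctly identify and neutralize.
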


\begin{proof}
  We prove the theorem using induction.
  The case where $d=1$ has already been proven as part of Claim \ref{cl:0th_case}.
  So let the theorem hold for all $1\leq d<D$.
  Let $\mathcal{F}$ be a $(D-1)$-collection in $\mathbb{R}^D$ for which Claim \ref{cl:convergence_line}, \ref{ite:Q} holds. 
  Then we have the following:
  $\forall n\in\mathbb{N}$, there is a subcollection $\mathcal{F}_n\subset\mathcal{F}$ which is a $(D-1)$-collection, and 
  $\mathcal{F}_n:=\{B\in\mathcal{F}\mid B\subset Q(\vec{r},n), B\cap\vec{r}=\emptyset\}$, where $\vec{r}$ is the ray in the notation of Claim \ref{cl:convergence_line}, \ref{ite:Q}.

  Let, for each $n\in\mathbb{N}$, $\mathcal{S}_n\subset\mathcal{F}_n$ be a finite collection such that if $H_n$ is a hyperplane transversal of $\mathcal{S}_n$,
  then $|H_n|\geq n$.
  Lemma \ref{lem:KP} ensures that we can find such an $\mathcal{S}_n$.
  Then, $\mathcal{S}:=\cup_{n\in\mathbb{N}}\mathcal{S}_n$ is a $(D-1)$-collection.

  Without loss of generality, we can assume that $\forall B\in\mathcal{F}$, $B\cap\vec{r}=\emptyset$.
  Let $h$ be the hyperplane through $p$ that is perpendicular to $\vec{r}$, and let $\pi:\mathbb{R}^D\to h$ be the 
  projection map.
  Then, $\{\pi(B)\mid B\in\mathcal{S}\}$ is a $(D-2)$-collection.
  Therefore, Claim \ref{cl:convergence_line} applies with $d=D-1$.
  If Claim \ref{cl:convergence_line}, \ref{ite:point} applies, then by Theorem \ref{thm:infinite_pq_hyperplane_bdd}, we get an infinite sequence in $\{\pi(B)\mid B\in\mathcal{S}\}$ that is $(D-2)$-independent.
  If Claim \ref{cl:convergence_line}, \ref{ite:hyp} applies, then too, by applying our induction hypothesis, we get an infinite sequence in $\{\pi(B)\mid B\in\mathcal{S}\}$ that is $(D-2)$-independent.
  So, there is a sequence $\{B'_n\}_{n\in\mathbb{N}}\subseteq\mathcal{S}$ such that
  $\{\pi(B'_n)\}_{n\in\mathbb{N}}$ is $(D-2)$-independent. 
  Without loss of generality, let $B'_n\in\mathcal{F}_n$ $\forall n\in\mathbb{N}$.
  
  Define $B_1:=B'_1,\dots,B_{D}:=B'_{D}$.
  We now show that we can extend the sequence $B_1,\dots,B_D$ to an infinite $(D-1)$-independent sequence.
  Consider the general case where we have the sequence $B_1,\dots,B_m$, $B_i\in\{B'_n\}_{n\in\mathbb{N}}$, of distinct sets, where $m\geq D$, such that there is no hyperplane passing through $B_{i_1},\dots,B_{i_{D+1}}$, for distinct $i_j\in[m]$ $\forall j\in[D+1]$.
  When $m=D$, the claim is vacuously true.
  We show that there is an $n'\in\mathbb{N}$ such that for every $n\in\mathbb{N}$, $n\geq n'$, the sequence $B_1,\dots,B_m,B'_n$ is $(D-1)$-independent. 
  Choose distinct $B_{i_1},\dots,B_{i_D}$, $i_j\in[m]$.
  Then, if $h'$ is a hyperplane passing through $B_{i_1}\dots,B_{i_D}$, then $h'$ cannot be parallel to $\vec{r}$, because if it did, then $\pi(h')$ would be a $(D-2)$-flat in $h$ passing through $D$ distinct sets of $\{\pi(B'_n)\}_{n\in\mathbb{N}}$,
  which is a contradiction.
  In Claim \ref{cl:Q_r}, we shall show that if here is no hyperplane passing through compact sets $B_{i_1},\dots,B_{i_D}$
  that contains $\vec{r}$, then there is an $n_m\in\mathbb{N}$ for which no hyperplane passing through $B_{i_1},\dots,B_{i_D}$
  intersects $Q(\vec{r},n)$ whenever $n\geq n_m$.
  For now, assume that Claim \ref{cl:Q_r} is true.
  Since there are only finitely many choices of distinct $B_{i_1},\dots,B_{i_D}$ chosen from $B_1,\dots,B_m$, assume, without loss of generality, that $n_m$ works for every choice of $B_{i_1},\dots,B_{i_D}$ chosen from $B_1,\dots,B_m$.
  Choose $B_{m+1}\in\mathcal{F}_{n_m}$.
  Then $\{B_1\dots,B_{m+1}\}$ is $(D-1)$-independent, as $B_{m+1}\subset Q(\vec{r},n_m)$.
  This shows that we can extend the sequence $B_1,\dots,B_m$ while maintaining the property of $(D-1)$-independence, which concludes the proof.
\end{proof}

Now all that is left to complete the proof of Theorem \ref{thm:hyperplane} is to prove Claim \ref{cl:Q_r}.

\begin{claim}\label{cl:Q_r}
  Let $d\in\mathbb{N}$ and $B_1,\dots,B_d$ be compact connected sets in $\mathbb{R}^d$ and $\vec{r}$ be a ray from a point $p\in\mathbb{R}^d$
  such that no hyperplane that passes through $B_1,\dots,B_d$ contains a straight line parallel to $\vec{r}$.
  Then there is an $N\in\mathbb{N}$ for which no hyperplane passing through $B_1,\dots,B_d$ intersects $Q(\vec{r},N)$.
\end{claim}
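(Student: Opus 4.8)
The plan is to parametrize hyperplanes by a unit normal and an offset: for $a\in\mathbb{S}^{d-1}$ and $b\in\mathbb{R}$ write $h_{a,b}:=\{x\in\mathbb{R}^d:\langle a,x\rangle=b\}$. After translating so that $p=O$ and rotating so that $\vec{r}=\{te_d:t\ge 0\}$, a hyperplane $h_{a,b}$ contains a straight line parallel to $\vec{r}$ exactly when its last coordinate $a_d$ vanishes, so the hypothesis reads: whenever $h_{a,b}$ meets every $B_i$, one has $a_d\ne 0$. Fix $R>0$ with $B_i\subseteq\overline{B}(O,R)$ for every $i\in[d]$. The crux is to turn this into a \emph{uniform} statement. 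Consider
\[
  \cH:=\{(a,b)\in\mathbb{S}^{d-1}\times\mathbb{R}:h_{a,b}\cap B_i\ne\emptyset\text{ for every }i\in[d]\}.
\]
Since each $B_i$ is compact and connected, $\{\langle a,x\rangle:x\in B_i\}$ is a closed interval $[m_i(a),M_i(a)]$ whose endpoints depend continuously on $a$, and $h_{a,b}$ meets $B_i$ precisely when $m_i(a)\le b\le M_i(a)$; hence $\cH$ is closed. It is also bounded, because $h_{a,b}\cap B_1\ne\emptyset$ forces $b=\langle a,x\rangle$ for some $x\in B_1$, whence $|b|\le R$. Thus $\cH$ is compact, and since $a\mapsto|a_d|$ is continuous and strictly positive on $\cH$ (the claim being vacuous when $\cH=\emptyset$), there is a $\delta>0$ with $|a_d|\ge\delta$ for all $(a,b)\in\cH$.

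Next I would record two elementary facts about $Q(\vec{r},N)$ in these coordinates, writing a point of $\mathbb{R}^d$ as $(z,t)$ with $z\in\mathbb{R}^{d-1}$ and $t\in\mathbb{R}$. Unwinding the definitions, $p+\mathrm{pos}(U_N)=\{(z,t):t\ge 0,\ \|z\|\le t/N\}$, and deleting the open ball $B(O,N)$ forces $\|(z,t)\|\ge N$; since $\|z\|\le t/N\le t$ this gives $\|(z,t)\|\le\sqrt{2}\,t$, hence $t\ge N/\sqrt{2}$. So every point of $Q(\vec{r},N)$ satisfies both $\|z\|\le t/N$ and $t\ge N/\sqrt{2}$.

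Finally I would combine the two. Suppose a point $(z,t)\in Q(\vec{r},N)$ lies on $h_{a,b}$ with $(a,b)\in\cH$; write $a=(\bar a,a_d)$ with $\bar a\in\mathbb{R}^{d-1}$. Then $a_d t=b-\langle\bar a,z\rangle$, and therefore
\[
  \delta\,t\le|a_d|\,t\le|b|+\|\bar a\|\,\|z\|\le R+\tfrac{t}{N},
\]
using $\|\bar a\|\le\|a\|=1$ and $\|z\|\le t/N$. Dividing by $t\ge N/\sqrt{2}>0$ yields $\delta\le R/t+1/N\le(\sqrt{2}\,R+1)/N$, i.e. $N\le(\sqrt{2}\,R+1)/\delta$. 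Hence any integer $N>(\sqrt{2}\,R+1)/\delta$ has the required property. The only step carrying real content is the compactness of $\cH$, which is exactly what converts the qualitative hypothesis into the uniform bound $|a_d|\ge\delta$; after that the claim drops out of the displayed estimate. The point to be careful about is the geometry of $Q(\vec{r},N)$ — namely that its points are simultaneously far from $p$ and angularly close to $\vec{r}$ — and I would re-verify the constants there, but I do not expect a genuine obstacle.
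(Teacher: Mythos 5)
Your proof is correct, and it takes a noticeably different (and more quantitative) route than the paper's. The paper parametrizes by tuples of points: it uses the hypothesis to show that each $x\in B_1\times\dots\times B_d$ determines a \emph{unique} hyperplane $h_x$, then applies compactness of the product to two continuous functions of $x$ --- the distance from $p$ to $h_x\cap l$ (giving the bound $N_1$) and the angle $h_x$ makes with $\vec{r}$ (giving $\theta>0$) --- and finishes with a qualitative cone argument. You instead parametrize the hyperplanes themselves by $(a,b)\in\mathbb{S}^{d-1}\times\RR$, show the set $\cH$ of transversal hyperplanes is compact (closedness via the interval description of $\{\langle a,x\rangle:x\in B_i\}$, boundedness via $|b|\le R$), extract the uniform bound $|a_d|\ge\delta$, and then close with an explicit inequality yielding $N>(\sqrt{2}R+1)/\delta$. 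The two compactness statements are morally the same ($\delta$ plays the role of $\theta$, and $R$ the role of $N_1$), but your version sidesteps the paper's slightly delicate points --- the uniqueness and continuity of the map $x\mapsto h_x$ and the informal ``angle'' function --- and produces an explicit value of $N$ rather than an existence statement. All the steps check out: the translation of the hypothesis into $a_d\ne 0$ on $\cH$, the description of $Q(\vec{r},N)$ as $\{(z,t):\|z\|\le t/N,\ t\ge N/\sqrt{2}\}$, and the final estimate are each correct. (Minor remark: closedness of $\cH$ does not actually need connectedness --- a diagonal compactness argument suffices --- but your interval argument is a clean way to get it.)
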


\begin{proof}
  Since for any choice of $b_i\in B_i$, $i\in[d]$, a hyperplane passing through $b_1,\dots,b_d$ does not contain a line parallel to 
  $\vec{r}$, we can conclude that there is a unique hyperplane that passes through $b_1,\dots,b_d$.
  Define $B:=B_1\times\dots\times B_d$, and let, for every $x\in B$, $h_x$ be the hyperplane passing through $x_1,\dots,x_d$, $x=(x_1,\dots,x_d)$.
  Let $l$ be the straight line on which $\vec{r}$ lies.
  Here, note that since $h_x$ is a hyperplane that is not perpendicular to $\vec{r}$, $h_x$ intersects $l$ for every $x\in B$, but does not contain $l$.
  Let $g:B\to\mathbb{R}^{+}\cup\{0\}$ be defined as $g(x)= \text{dist}(h_x\cap l,p)$.
  As $B$ is compact and $g$ is continuous, $g(B)$ is bounded.
  So, there is an $N_1\in\mathbb{N}$ for which $N_1> t$ for every $t\in g(B)$.
  Let $q\in\vec{r}$ such that $\text{dist}(p,q)>N_1$.
  Define $f:B\to (0,\pi)$, where $f(x)$ is the angle $h_x$ makes with any straight line parallel to $\vec{r}$.
  As $B$ is compact, $f(B)$ is closed, so there is a $\theta > 0$ such that for every $\phi\in f(B)$, 
  $\theta < \phi$.
  We can find an $N\in\mathbb{N}$ with $N>N_1$ for which every straight line joining any point in $Q(\vec{r},N)$ 
  with $q$ makes an angle smaller than $\theta$ with $\vec{r}$.
  Therefore, no hyperplane passing through $B_1,\dots,B_d$ intersects $Q(\vec{r},N)$.
\end{proof}

\bibliographystyle{alpha}
\bibliography{references}
\end{document}